\newtheorem{thm}{Theorem}[section]
\newtheorem{cor}[thm]{Corollary}
\newtheorem{lem}[thm]{Lemma}
\newtheorem{rem}[thm]{Remark}
\newtheorem{defn}[thm]{Definition}
\newcommand{\bremark}{\begin{rem} \textup}
\newcommand{\eremark}{\end{rem} }
\newcommand{\cuad}{{\sqcap\kern-.68em\sqcup}}
\newcommand{\R}{{\mathbb{R}}}
\newcommand{\N}{{\mathbb{N}}}
\renewcommand{\rho}{\varrho}
\renewcommand{\theta}{\vartheta}
\begin{document}

\subjclass[2000]{35J70; 35J62; 35B06}

\parindent 0pc
\parskip 6pt
\overfullrule=0pt

\title[Qualitative properties of singular solutions ]{Qualitative properties of singular solutions to semilinear elliptic problems }

\author {F. Esposito*, A.Farina$^+$ and B. Sciunzi*}

\date{\today}

\date{\today}

\address{* Dipartimento di Matematica e Informatica, UNICAL,
Ponte Pietro  Bucci 31B, 87036 Arcavacata di Rende, Cosenza, Italy.}

\email{sciunzi@mat.unical.it}
\email{esposito@mat.unical.it}

\address{+ Universit\'e de Picardie Jules Verne, LAMFA, CNRS UMR
7352, 33, rue Saint-Leu 80039 Amines, France}

\email{alberto.farina@u-picardie.fr}

\keywords{Semilinear elliptic equations, singular solutions,
qualitative properties}

\subjclass[2000]{35J61,35B06,35B50}


\maketitle

\date{\today}

\begin{abstract}
We consider positive singular solutions to semilinear elliptic
problems with possibly singular nonlinearity. We deduce symmetry and
monotonicity properties of the solutions via the moving plane
procedure.
\end{abstract}
\section{introduction}
The aim of the paper is to investigate symmetry and monotonicity properties of singular solutions to semilinear elliptic equations. We address the issue of problems involving singular nonlinearity. More precisely let us consider the
 problem
\begin{equation} \label{problem}
\begin{cases}
-\Delta u\,=f(x,u)& \text{in}\quad\Omega\setminus \Gamma  \\
u> 0 &  \text{in}\quad\Omega\setminus \Gamma  \\
u=0 &  \text{on}\quad\partial \Omega\,
\end{cases}
\end{equation}
where $\Omega$ is a bounded smooth domain of $\mathbb{R}^n$ with
$n\geq 2$. Our results will be obtained by means of the moving plane technique, see \cite{A,BN,GNN,serrin}. Such a technique can be performed in general domains providing partial monotonicity results near the boundary and symmetry when the domain is convex and symmetric.
For semplicity of exposition we assume directly in all the paper that $\Omega$
 is a convex domain which   is symmetric with
respect to the hyperplane $\{x_1=0\}$. The solution has a possible
singularity on the critical set   $\Gamma\subset \Omega$. Furthermore
in all the paper the nonlinearity $f$ will be assumed to be
uniformly locally Lipschitz continuous from above far from the
singular set. More precisely we state the following:

\begin{defn}[$h_f$]\label{djkfhsdjk}
\noindent  We  say that $f$ fulfills the condition $(h_f)$ if $f:
{\overline \Omega}\setminus \Gamma \times (0,+\infty) \rightarrow \R$ is a
continuous function such that for $0 \, < t \leq s\leq M$ and for any
compact set $K\subset {\overline \Omega} \setminus\Gamma$, it holds
\begin{equation}\nonumber
 f(x,s)-f(x,t)\leq C(K,M)(s-t)\qquad \text{for any}\quad x\in K\,,
\end{equation}
where $C(K,M)$ is a positive constant depending on $K$ and $M$.
Furthermore $f(\cdot,s)$ is non-decreasing in the $x_1$-direction in
$\Omega\cap\{x_1<0\}$ and symmetric with respect to the hyperplane
$\{x_1=0\}$.
\end{defn}
A typical example is provided by positive solutions to
\begin{equation}\label{singulareq}
-\Delta u\,=\frac{1}{u^\alpha} + g(u) \quad
\text{in}\quad\Omega\setminus \Gamma
\end{equation}
where $\alpha >0$ and $g$ is locally Lipschitz continuous. Such a
problem, in the case $\Gamma=\emptyset$, as been widely investigated
in the literature. We refer the readers to the pioneering work
\cite{crandall} and to
\cite{boccardo,CanDeg,gras1,ccmcan,nodt,lazer,stuart,oliva}. In
particular, by \cite{lazer}, it is known that
 solutions generally have no $H^1$-regularity up to the boundary. Therefore, having this example in mind,  the natural assumption in our paper is
$$u \in H_{loc}^1(\Omega \setminus \Gamma) \cap
C(\overline{\Omega} \setminus \Gamma)$$
and thus the equation is
understood in the following sense:
\begin{equation}\label{debil1}
\int_\Omega \nabla u\nabla \varphi\,dx\,=\,\int_\Omega f(x,u)\varphi\,dx\qquad\forall \varphi\in C^{1}_c(\Omega\setminus\Gamma)\,.
\end{equation}

\begin{rem}\label{standellest}
Note that, by the assumption $(h_f)$, the right hand
side in the equation of \eqref{problem} is locally bounded.
Therefore, by standard elliptic regularity theory, it follows that
$$u \in C_{loc}^{1,\alpha} (\Omega \setminus \Gamma),$$
\noindent where $0 < \alpha < 1$.
\end{rem}

Let us now state our main result

\begin{thm}\label{main}
Let $\Omega$ be a convex domain which is symmetric with respect to the hyperplane $\{x_1=0\}$ and let $u\in H^1_{loc}(\Omega\setminus\Gamma)\cap
C(\overline\Omega\setminus\Gamma)$ be a solution to \eqref{problem}.
Assume that $f$ fulfills $(h_f)$ (see Definition \ref{djkfhsdjk}). Assume also that $\Gamma$ is a point if $n=2$ while
 $\Gamma$ is closed and such that
$$\underset{\R^n}{\operatorname{Cap}_2}(\Gamma)=0,$$
if $n\geq 3$.
\noindent Then, if $\Gamma\subset\{x_1=0\}$, it follows that   $u$
is symmetric with respect to the hyperplane $\{x_1=0\}$ and
increasing in the $x_1$-direction in $\Omega\cap\{x_1<0\}$.
Furthermore
\[u_{x_1}>0\qquad \text{in}\quad \Omega\cap\{x_1<0\}\,.
\]
\end{thm}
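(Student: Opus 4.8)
The strategy is the classical moving plane method of Alexandrov–Serrin–Gidas–Ni–Nirenberg, adapted to handle the singular set $\Gamma$. For $\lambda < 0$ define the cap $\Omega_\lambda = \Omega \cap \{x_1 < \lambda\}$, the reflection $x^\lambda = (2\lambda - x_1, x_2, \dots, x_n)$, and $u_\lambda(x) = u(x^\lambda)$. Since $\Gamma \subset \{x_1 = 0\}$, for every $\lambda < 0$ the reflected cap $\Omega_\lambda$ and its image stay away from $\Gamma$ on one side, so $u_\lambda$ is smooth where we need it; the only delicate point is that $u$ itself may be singular near $\Gamma$, but $\Gamma$ meets $\overline{\Omega_\lambda}$ only possibly along $\{x_1=0\}$, which for $\lambda<0$ is disjoint from $\overline{\Omega_\lambda}$. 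Set $w_\lambda = u_\lambda - u$ on $\Omega_\lambda$. Using the weak formulation \eqref{debil1}, the monotonicity of $f$ in $x_1$ (so $f(x^\lambda, u_\lambda) \le f(x, u_\lambda)$ on $\Omega_\lambda$, as $x_1 < \lambda < 0$ forces $x_1 < 2\lambda - x_1$... careful: we need $f(x^\lambda,\cdot)\ge f(x,\cdot)$, which follows since reflection moves points toward $\{x_1=0\}$ and $f(\cdot,s)$ is non-decreasing in $x_1$ on $\{x_1<0\}$), and the Lipschitz-from-above condition $(h_f)$, one obtains that $w_\lambda^- = \min(w_\lambda, 0)$ satisfies a differential inequality of the form $-\Delta w_\lambda \le c_\lambda(x)\, w_\lambda$ in the weak sense on the set $\{w_\lambda < 0\}$, with $c_\lambda \in L^\infty_{loc}$.

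The plan then proceeds in the standard two stages. \emph{Step 1 (starting the plane).} For $\lambda$ close to $-\,\sup_{x\in\Omega} |x_1|$, i.e. for $\Omega_\lambda$ a thin cap near the boundary, one shows $w_\lambda \ge 0$ in $\Omega_\lambda$. The cleanest route is to test the weak inequality for $w_\lambda^-$ against $w_\lambda^-$ itself (an admissible test function, after a cutoff argument removing any part of $\Gamma$ — here unnecessary since $\overline{\Omega_\lambda}\cap\Gamma=\emptyset$), getting $\int_{\Omega_\lambda}|\nabla w_\lambda^-|^2 \le \|c_\lambda\|_\infty \int_{\Omega_\lambda}(w_\lambda^-)^2$, and then invoking the Poincaré inequality on the thin domain $\Omega_\lambda$, whose constant tends to $0$ as $|\Omega_\lambda| \to 0$; this forces $w_\lambda^- \equiv 0$. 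Note $w_\lambda \ge 0$ on $\partial \Omega_\lambda$ because on the flat part $\{x_1 = \lambda\}$ we have $w_\lambda = 0$, and on $\partial\Omega \cap \{x_1<\lambda\}$ we have $u = 0 \le u_\lambda$.

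\emph{Step 2 (moving the plane).} Define $\Lambda = \sup\{\lambda < 0 : w_\mu \ge 0 \text{ in } \Omega_\mu \text{ for all } \mu \in (-\sup|x_1|, \lambda]\}$. By continuity $w_\Lambda \ge 0$ in $\Omega_\Lambda$, and $w_\Lambda \not\equiv 0$ would, by the strong maximum principle applied to $-\Delta w_\Lambda + c_\Lambda^- w_\Lambda \ge 0$ (with $c_\Lambda^-$ the negative part, or after adding a large constant to make the zeroth-order coefficient of one sign), give $w_\Lambda > 0$ in the interior. One then argues $\Lambda = 0$: if $\Lambda < 0$, a standard argument combining the strong maximum principle, the Hopf lemma on the portion of $\partial\Omega_\Lambda$ lying on $\partial\Omega$, and a measure-theoretic/compactness argument (choose a compact $K \subset \Omega_\Lambda$ with $|\Omega_\Lambda \setminus K|$ small, use $w_\Lambda \ge \delta > 0$ on $K$, and run the thin-domain Poincaré estimate of Step 1 on $\Omega_{\Lambda+\tau}\setminus K$) shows $w_{\Lambda+\tau} \ge 0$ for small $\tau > 0$, contradicting maximality. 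Hence $u_\lambda \ge u$ on $\Omega_\lambda$ for all $\lambda \le 0$; by continuity $u(-x_1, x') \ge u(x_1,x')$ on $\{x_1<0\}$, and the symmetric argument from the right gives equality, i.e. $u$ is symmetric about $\{x_1=0\}$. Monotonicity $w_\lambda \ge 0$ for $\lambda < 0$ gives $u_{x_1} \ge 0$ on $\{x_1<0\}$; applying the strong maximum principle to the equation satisfied by $u_{x_1}$ (which solves a linearized equation, using $(h_f)$ to control the zeroth-order term, valid away from $\Gamma$) upgrades this to $u_{x_1} > 0$ in $\Omega \cap \{x_1 < 0\}$.

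\emph{Main obstacle.} The genuinely new difficulty compared to the classical regular case is controlling the interaction of the moving plane with the singular set $\Gamma$ and justifying that $w_\lambda^-$ (or a suitable cutoff of it) is a legitimate test function in \eqref{debil1} despite $u \notin H^1$ globally. The hypothesis $\mathrm{Cap}_2(\Gamma) = 0$ (or $\Gamma$ a point when $n=2$) is exactly what is needed: it provides cutoff functions $\varphi_\e$, equal to $0$ near $\Gamma$ and $1$ outside an $\e$-neighborhood, with $\int |\nabla \varphi_\e|^2 \to 0$, so that the error terms produced by inserting $\varphi_\e$ into the energy estimates vanish in the limit. Verifying that this capacity argument interacts correctly with the boundary terms and the Poincaré constants — in particular that nothing degenerates when $\Lambda$ could a priori equal $0$ and the plane reaches $\Gamma$ — is the technical heart of the proof, and is where condition $(h_f)$ and the structural assumption $\Gamma \subset \{x_1 = 0\}$ must be used most carefully.
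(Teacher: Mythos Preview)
Your overall strategy matches the paper's (moving plane, test with the truncated comparison function, Poincar\'e on thin sets, compact-set continuation), but there are two concrete technical points where the proposal is wrong or incomplete.

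\textbf{The geometry of the singularity is backwards.} You write that ``$u_\lambda$ is smooth where we need it'' and that the delicate point is $u$ near $\Gamma$. In fact the opposite holds: since $\Gamma\subset\{x_1=0\}$ and $\lambda<0$, $u$ is perfectly regular on $\overline{\Omega_\lambda}$, while $u_\lambda(x)=u(x^\lambda)$ is singular precisely on the \emph{reflected} set $R_\lambda(\Gamma)\subset\{x_1=2\lambda\}$, which lies \emph{inside} $\Omega_\lambda$ as soon as $2\lambda>a$. So the capacity cutoff $\psi_\varepsilon$ must be built around $R_\lambda(\Gamma)$, not around $\Gamma$; this is exactly what the paper does in Section~\ref{notations} and in Lemmas~\ref{leaiuto}--\ref{leaiuto2}. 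In Step~1 one can indeed take $\lambda$ so close to $a$ that $R_\lambda(\Gamma)\cap\overline\Omega=\emptyset$, but in Step~2 there is no way to avoid $R_\lambda(\Gamma)\subset\Omega_\lambda$, so the cutoff is essential there.

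\textbf{A second cutoff at the boundary ``corner'' is missing.} Because the nonlinearity may be singular (e.g.\ $f\sim u^{-\alpha}$), solutions are in general \emph{not} in $H^1$ up to $\partial\Omega$ (cf.\ \cite{lazer}); the test functions in \eqref{debil1} must therefore have compact support in $\Omega$. The candidate $w_\lambda^{+}\chi_{\Omega_\lambda}$ vanishes on $\partial\Omega\cap\{x_1<\lambda\}$ (where $u=0<u_\lambda$) and on $T_\lambda$, but at the corner set $\gamma_\lambda:=\partial\Omega\cap T_\lambda$ one has $u=u_\lambda=0$ and there is no reason for $w_\lambda^{+}$ to vanish in a full neighborhood. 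The paper handles this with a second cutoff $\phi_\tau$ (see \eqref{test2}), built from the fact that $\gamma_\lambda$ is an $(n-2)$-dimensional manifold (this is where the convexity of $\Omega$ enters) and hence has zero $2$-capacity. Without this step your test function is not admissible and the energy identity cannot be justified. Both cutoffs appear in every stage of the paper's proof, including Step~1.
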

\begin{rem}
Theorem \ref{main} is proved for convex domains. It will be clear from the proofs that this is only used to prove that $\partial\Omega\cap\{x_1=\lambda\}$ is discrete in dimension two while $\partial\Omega\cap\{x_1=\lambda\}$ has zero capacity for $n\geq3$. Therefore the result holds true more generally once that such an information is available. In all this cases we could  assume  that $\Omega$ is convex only in the $x_1$-direction.
\end{rem}
First results regarding the applicability of the moving plane procedure to the case of singular solutions go back to
 \cite{CLN2} (see also \cite{T}) where the case when the singular set is a single point is considered. We follow and improve here the technique in \cite{Dino}, where
 the case of a smooth $(n-2)$-dimensional singular set was considered in the case of locally Lipschitz continuous nonlinearity. Let us mention that
 the technique introduced in \cite{Dino} also works in the nonlocal context, see \cite{mps}.\\
 \noindent On the other hand, in the case $\Gamma=\emptyset$, symmetry and monotonicity properties of solutions to semilinear elliptic problems
 involving singular nonlinearities, have been studied in \cite{gras1,luigi}. Also in this direction our result is new and more general. In fact, while in \cite{gras1,luigi} it is necessary to restrict the attention to problems of the form \eqref{singulareq}, here we only need to consider nonlinearities that are locally Lipschitz continuous from above. Actually, all the nonlinearities of the form
 \[
 f(x,s)\,:= \, a_1(x)f_1(s)+a_2(x) f_2(s)\,,
 \]
where $f_1$ is an increasing continuous function in $[0,\infty)$, $f_2(\cdot),$ is locally Lipschitz continuous in $[0,\infty)$ and $ a_1, a_2 \in C^0(\overline{\Omega})$, $ \, a_1 \ge 0 $ on $ \overline{\Omega}$, satisfy our assumptions.

\noindent

The technique, as showed in \cite{Dino}, can be applied to study
singular solutions to the following Sobolev critical equation in
$\mathbb{R}^n, n\ge3,$
\begin{equation}
\label{problemubdd}
\begin{cases}
-\Delta u\,
= u^{2^*-1} & \text{in}\quad\mathbb{R}^n\setminus \Gamma  \\
u> 0 &  \text{in}\quad\mathbb{R}^n\setminus \Gamma.  \\
\end{cases}
\end{equation}
In \cite{Dino} it was considered the case of a closed critical set
$\Gamma$ contained in a compact smooth submanifold of dimension $d
\le n-2$ and a summability property of the solution at infinity was
imposed  (see also \cite{T} for the special case in which the
singular set $ \Gamma$ is reduced to a single point). Here we remove
both these restrictions and we prove the following:

\begin{thm}\label{main2}
Let $n\geq 3$ and let $u\in H^1_{loc}(\mathbb{R}^n\setminus\Gamma)$
be a solution to \eqref{problemubdd}. Assume that the solution $u$
has a non-removable\footnote{Here we mean that the solution $u$ does not admit a smooth extension all over the whole space. Namely it is not possible to find $\tilde u\in H^1_{loc}(\mathbb{R}^n)$ with $u\equiv\tilde u$ in $\mathbb{R}^n\setminus\Gamma$.} singularity in the singular set $\Gamma$, where
$\Gamma$ is a closed and proper subset of $\{x_1=0\}$ such that
$$\underset{\R^n}{\operatorname{Cap}_2}(\Gamma)=0. $$
\noindent Then, $u$ is symmetric with respect to the hyperplane $\{x_1=0\}$. \\
The same conclusion is true if the hyperplane $\{x_1=0\}$ is
replaced by any affine hyperplane.
\end{thm}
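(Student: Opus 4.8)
The plan is to reduce, by means of the Kelvin transform, to a configuration to which the moving plane scheme used for Theorem~\ref{main} (and in \cite{Dino}) applies; the only genuinely new feature with respect to the bounded case is the behaviour at infinity, which the Kelvin transform converts into one extra point of a singular set of zero $2$-capacity. Since $\Gamma$ is a proper subset of $\{x_1=0\}$, I fix a point $y_0\in\{x_1=0\}\setminus\Gamma$; as $\Gamma$ is closed, $u$ is positive and smooth near $y_0$. Let $\hat u$ be the Kelvin transform of $u$ centred at $y_0$, namely $\hat u(x)=|x-y_0|^{2-n}\,u\!\left(y_0+\frac{x-y_0}{|x-y_0|^2}\right)$. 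A direct computation (conformal covariance, with the critical exponent playing its special role) shows that $\hat u$ solves $-\Delta\hat u=\hat u^{2^*-1}$ in $\mathbb{R}^n\setminus Z$, in the $H^1_{loc}$ sense, where $Z$ consists of the image of $\Gamma$ under the inversion together with the point $y_0$, the image of infinity. Since the centre $y_0$ of the inversion lies on $\{x_1=0\}$, the inversion maps $\{x_1=0\}$ onto itself, so $Z\subset\{x_1=0\}$; moreover $Z$ is closed, and, since zero $2$-capacity is preserved under the inversion (locally away from $y_0$ by a bi-Lipschitz argument, and near $y_0$ by a covering/scaling argument) while a point has zero $2$-capacity, we get $\operatorname{Cap}_2(Z)=0$. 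As $|x|\to\infty$ the argument of $u$ above tends to $y_0$, whence $\hat u(x)\sim u(y_0)\,|x-y_0|^{2-n}$, i.e. $\hat u$ has the critical decay at infinity. Finally, the singularity of $u$ on $\Gamma$ being non-removable, so is that of $\hat u$ on $Z$; by the removability of zero $2$-capacity sets for bounded weak solutions, $\hat u$ must then be unbounded near some point of $Z$.

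Now I run the moving plane procedure for $\hat u$ in the $x_1$-direction. Writing $\Sigma_\lambda=\{x_1<\lambda\}$, $x^\lambda=(2\lambda-x_1,x')$, $\hat u_\lambda(x)=\hat u(x^\lambda)$ and $w_\lambda=\hat u_\lambda-\hat u$, one has $-\Delta w_\lambda=c_\lambda(x)\,w_\lambda$ with $0\le c_\lambda(x)\le C\,\hat u(x)^{2^*-2}$ on the set $\{w_\lambda<0\}$. Testing the differential inequality satisfied by $w_\lambda^-$ and using that, by the decay $\hat u(x)=O(|x|^{2-n})$, the function $\hat u^{2^*-2}$ is small in $L^{n/2}(\Sigma_\lambda)$ for $\lambda\ll0$, one obtains $w_\lambda\ge0$ in $\Sigma_\lambda$ for $\lambda$ sufficiently negative, exactly as in \cite{Dino}. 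Let $\lambda_0$ be the supremum of the $\lambda\le0$ such that $w_\mu\ge0$ in $\Sigma_\mu$ for every $\mu\le\lambda$. If $\lambda>0$ then $Z\subset\{x_1=0\}\subset\Sigma_\lambda$ while $x^\lambda\notin Z$ for every $x\in\Sigma_\lambda$, so $\hat u_\lambda$ is locally bounded near $Z$ whereas $\hat u$ is not; hence $\inf_{\Sigma_\lambda}w_\lambda=-\infty$ and $\lambda_0\le0$. Assume now $\lambda_0<0$. Then $\{x_1=0\}\cap\Sigma_{\lambda_0}=\emptyset$, so $\hat u$ is smooth in $\Sigma_{\lambda_0}$, while $\hat u_{\lambda_0}$ is singular on $Z^{\lambda_0}\subset\{x_1=2\lambda_0\}\subset\Sigma_{\lambda_0}$; thus $w_{\lambda_0}\ge0$, $w_{\lambda_0}\not\equiv0$, and the strong maximum principle, applied in $\Sigma_{\lambda_0}\setminus Z^{\lambda_0}$ (connected, since $\operatorname{Cap}_2(Z^{\lambda_0})=0$), gives $w_{\lambda_0}>0$ there. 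A standard continuation step, in which $\Sigma_\lambda$ is split into a compact region (where positivity persists by continuity) and the union of a thin slab near $\{x_1=\lambda\}$ with a neighbourhood of infinity (where the $L^{n/2}$ smallness argument applies), the singularity of $\hat u_\lambda$ on $Z^\lambda$ being absorbed by cutoff functions of arbitrarily small Dirichlet energy --- possible precisely because $\operatorname{Cap}_2(Z^\lambda)=0$, cf. the proof of Theorem~\ref{main} and \cite{Dino} --- then yields $w_\lambda\ge0$ in $\Sigma_\lambda$ for $\lambda$ slightly larger than $\lambda_0$, contradicting the maximality of $\lambda_0$. Therefore $\lambda_0=0$, i.e. $\hat u(-x_1,x')\ge\hat u(x_1,x')$ for $x_1<0$.

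Applying the same argument to the function $x\mapsto\hat u(-x_1,x')$, which solves the same problem with the same (reflection-invariant, since $Z\subset\{x_1=0\}$) singular set $Z$ and still decays at infinity, gives the reverse inequality, so $\hat u$ is symmetric with respect to $\{x_1=0\}$. Since the inversion centred at $y_0\in\{x_1=0\}$ commutes with the reflection $x\mapsto(-x_1,x')$ and the conformal factor $|x-y_0|^{2-n}$ is invariant under that reflection, undoing the Kelvin transform shows that $u$ itself is symmetric with respect to $\{x_1=0\}$. The case of an arbitrary affine hyperplane follows at once by applying the result to $u\circ R$ for a suitable rigid motion $R$, since both \eqref{problemubdd} and the admissibility conditions on the singular set are invariant under $R$. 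The main obstacle is the continuation step across the zero $2$-capacity singular set, that is, a weak comparison principle in $\Sigma_\lambda$ minus a set of zero $2$-capacity for the Sobolev-critical nonlinearity; this is exactly the point where the hypothesis $\operatorname{Cap}_2(\Gamma)=0$ is used, and it is carried out as in the proof of Theorem~\ref{main}.
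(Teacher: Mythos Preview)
Your proposal is correct and follows essentially the same route as the paper: Kelvin transform centred at a point of $\{x_1=0\}\setminus\Gamma$ to gain the $|x|^{2-n}$ decay, verification that the transformed singular set $\Gamma^*\cup\{0\}$ still has zero $2$-capacity and lies in $\{x_1=0\}$, moving plane in $\Sigma_\lambda$ with capacity cutoffs around $R_\lambda(\Gamma^*\cup\{0\})$ and radial cutoffs at infinity, the non-removability ruling out $w_{\lambda_0}\equiv0$ when $\lambda_0<0$, and the continuation step via the $L^{n/2}$ (equivalently $L^{2^*}$) smallness of $v$ on $\Sigma_{\lambda_0+\tau}\setminus K$. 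The paper organises the a priori estimate $\nabla w_\lambda^+\in L^2(\Sigma_\lambda)$ as a separate lemma and works with $w_\lambda=v-v_\lambda$ rather than your $w_\lambda=\hat u_\lambda-\hat u$, but these are cosmetic differences.
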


Some interesting consequences of the previous result are contained in the following

\begin{cor} \label{CorSim}
Let $n\geq 3$ and let $u\in H^1_{loc}(\mathbb{R}^n\setminus\Gamma)$
be a solution to \eqref{problemubdd} with a non-removable
singularity in the singular set $\Gamma$.
\item (i)  If $\Gamma =\{ x_0 \}$, then $u$ is radially symmetric with respect to $x_0$.
\item (ii)  If $\Gamma =\{ x_0, x_1 \}$, then $u$ has cylindrical symmetry with respect to the axis passing through $x_0$ and $x_1$. \\
More generally we have :
\item (iii) assume $ 1 \le k \le n-2 $ and suppose that $\Gamma$ is a closed subset of an affine $k-$dimensional subspace of $\R^n$. Then, up to isometry, the solution $u$ has the form $ u(x) = u(x_1,...,x_k, \vert x^{'} \vert)$,
where $x^{'} := (x_{k+1},...x_n)$ and $ \vert x^{'} \vert :=
\sqrt{x^2_{k+1} +...+ x^2_n}.$
\end{cor}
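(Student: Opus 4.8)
The plan is to deduce all three statements from Theorem \ref{main2} by applying it to a well-chosen family of affine hyperplanes, and then to combine the resulting reflection symmetries of $u$ into the asserted invariance by means of the elementary fact that the reflections with respect to all the affine hyperplanes containing a fixed affine subspace $W$ generate the group of all isometries of $\R^n$ fixing $W$ pointwise. Thus in each case I first check that Theorem \ref{main2} applies to every hyperplane of the relevant family — i.e.\ that $\Gamma$ is a closed proper subset of such a hyperplane and that $\operatorname{Cap}_2(\Gamma)=0$, the non-removability of the singularity being untouched since $u$ and $\Gamma$ do not change — then conclude that $u$ is symmetric about each of them, and finally read off the claimed symmetry type.

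Concretely: for (i), after a translation sending $x_0$ to the origin, $\Gamma=\{0\}$ is $\operatorname{Cap}_2$-null (recall $n\ge 3$) and is a proper closed subset of every affine hyperplane through the origin, so Theorem \ref{main2} gives that $u$ is symmetric with respect to all of them; since those reflections generate $O(n)$, the function $u$ is $O(n)$-invariant about $x_0$, i.e.\ radial. For (ii), letting $\ell$ be the line through the two points of $\Gamma$, the set $\Gamma$ is again $\operatorname{Cap}_2$-null and, because $n\ge 3$, is a proper closed subset of every hyperplane containing $\ell$; Theorem \ref{main2} then yields symmetry with respect to all such hyperplanes, and these reflections generate precisely the subgroup of isometries fixing $\ell$ pointwise, which is cylindrical symmetry about $\ell$. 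For (iii), after an isometry I take the $k$-plane containing $\Gamma$ to be $V=\{x_{k+1}=\dots=x_n=0\}$; since $k\le n-2$ one has $\operatorname{Cap}_2(V)=0$, hence $\operatorname{Cap}_2(\Gamma)=0$ by monotonicity, and since $\dim V<n-1$ every hyperplane $H\supseteq V$ contains $\Gamma$ as a proper closed subset. Theorem \ref{main2} then gives symmetry of $u$ in every such $H$, and the corresponding reflections generate $\{\mathrm{id}_{\R^k}\}\times O(n-k)$ acting on $x'=(x_{k+1},\dots,x_n)$, which is exactly the assertion that $u=u(x_1,\dots,x_k,|x'|)$.

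The substantive content is entirely contained in Theorem \ref{main2}; the only step needing (routine) justification is the group-theoretic claim that the reflections in the hyperplanes through a fixed affine subspace $W$ generate all isometries fixing $W$ pointwise. Passing to the orthogonal complement of the direction space of $W$, this is just the Cartan--Dieudonn\'e theorem that $O(m)$ is generated by hyperplane reflections. I also use the classical capacity facts that a point (in dimension $n\ge 3$) and a $k$-dimensional affine subspace with $k\le n-2$ are $\operatorname{Cap}_2$-null, together with monotonicity of $\operatorname{Cap}_2$. I do not foresee any genuine obstacle beyond this bookkeeping.
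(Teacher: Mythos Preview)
Your proposal is correct and follows essentially the same approach as the paper: translate/rotate so that the relevant affine subspace is in standard position, note that $\Gamma$ has zero $2$-capacity (using that $k\le n-2$), apply Theorem~\ref{main2} to every hyperplane containing that subspace, and conclude the stated symmetry from the resulting family of reflection symmetries. The only cosmetic differences are that the paper derives (ii) as the special case $k=1$ of (iii), and that you make the group-theoretic step explicit via Cartan--Dieudonn\'e whereas the paper simply states the rotational invariance.
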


The following example shows that Theorem \ref{main2} and item (iii) of Corollary \ref{CorSim} are sharp for $n \ge 5$ and also that singular solutions exhibiting un unbounded critical set $\Gamma$ exist. \\\\
For $n \ge 5$ and $ 1 \le k < \frac{n-2}{2}$, $k$ integer, we set $
p=p(n) = \frac{n+2}{n-2} >1$ and $A = A(n,k) = [(\frac{n}{2}-k-1)
\frac{n}{2}]^{\frac{n-2}{4}}>0$. Then, the function $v(r) = A
r^{-\frac{2}{p(n)-1}} $ is a singular positive radial solution of $
- \Delta v = v^{p(n)}$ in $ \R^{n-k} \setminus \{0^{'}\}$, which is
smooth in  $ \R^{n-k} \setminus \{0^{'}\}$. Hence $u= u(x_1,...,x_n)
:= v(\vert x^{'} \vert)$ is a singular solution to
\eqref{problemubdd} in $\R^n \setminus \Gamma$, with $\Gamma$ given
by the $k-$dimensional subspace $\{ x_1=...=x_k=0 \} \subset \R^n$,
moreover $u \in C^{\infty}(\R^n \setminus \Gamma)$.

The remaining part of the paper is devoted to the proofs of our
results.

\section{Notations and preliminary results} \label{notations}

For a real number
$\lambda$ we set
\begin{equation}\label{eq:sn2}
\Omega_\lambda=\{x\in \Omega:x_1 <\lambda\}
\end{equation}
\begin{equation}\label{eq:sn3}
x_\lambda= R_\lambda(x)=(2\lambda-x_1,x_2,\ldots,x_n)
\end{equation}
which is the reflection through the hyperplane $T_\lambda :=\{ x_1=
\lambda\}$. Also let
\begin{equation}\label{eq:sn4}
a=\inf _{x\in\Omega}x_1.
\end{equation}

Since $\Gamma$ is compact and of zero capacity, $u$ is defined a.e. on $\Omega$ and Lebesgue measurable on $\Omega$.
Therefore the function \begin{equation}\label{eq:sn33}
u_\lambda := u \circ R_{\lambda}
\end{equation}
is Lebesgue measurable on $R_{\lambda}(\Omega)$. Similarly, $\nabla u$ and $\nabla u_{\lambda}$ are Lebesgue measurable on $\Omega$ and $R_{\lambda}(\Omega)$ respectively.


It is easy to see that, if
$\underset{\R^n}{\operatorname{Cap}_2}(\Gamma)=0$, then
$\underset{\R^n}{\operatorname{Cap}_2}(R_{\lambda}(\Gamma))=0$.
Another consequence of our assumptions
is that
$\underset{B^{\lambda}_{\epsilon}}{\operatorname{Cap}_2}(R_{\lambda}(\Gamma))=0$
for any open neighborhood $\mathcal{B}^{\lambda}_{\epsilon}$ of
$R_{\lambda}(\Gamma)$.
Indeed, recalling that $\Gamma$ is a point if $n=2$ while
 $\Gamma$ is closed with
$\underset{\R^n}{\operatorname{Cap}_2}(\Gamma)=0$
if $n\geq 3$ by assumption, it follows that
$$\underset{\mathcal{B}^{\lambda}_{\epsilon}}{\operatorname{Cap}_2}(R_{\lambda}(\Gamma)):=
\inf \left\{ \int_{\mathcal{B}^{\lambda}_{\epsilon}} |\nabla \varphi
|^2 dx < + \infty \; : \; \varphi \geq 1 \ \text{in} \
\mathcal{B}^{\lambda}_{\delta}, \; \varphi \in \
C^{\infty}_c(\mathcal{B}^{\lambda}_{\epsilon}) \right\}=0,$$
\noindent for some neighborhood $\mathcal{B}^{\lambda}_{\delta}
\subset \mathcal{B}^{\lambda}_{\varepsilon}$ of
$R_{\lambda}(\Gamma)$. From this, it follows that there exists
$\varphi_{\varepsilon} \in \
C^{\infty}_c(\mathcal{B}^{\lambda}_{\epsilon})$ such that
$\varphi_{\varepsilon} \geq 1$ in $\mathcal{B}^{\lambda}_{\delta}$
and $\displaystyle \int_{\mathcal{B}^{\lambda}_{\epsilon}} |\nabla
\varphi_{\varepsilon} |^2 dx < \varepsilon$.

Now we construct a function $\psi_{\varepsilon} \in C^{0,1}(\R^n, [0,1])$ such that
$\psi_{\varepsilon} = 1$ outside
$\mathcal{B}_{\varepsilon}^{\lambda}$, $\psi_{\varepsilon} = 0$ in
$\mathcal{B}_{\delta}^{\lambda}$ and
$$\int_{\R^n} |\nabla \psi_{\varepsilon} |^2 dx = \int_{\mathcal{B}^{\lambda}_{\epsilon}} |\nabla \psi_{\varepsilon}
|^2 dx < 4 \varepsilon.$$
\noindent To this end we consider the following Lipschitz continuous function

$$T_1(s)= \begin{cases}
1 &  \text{if}\quad s \le 0  \\
-2s + 1 & \text{if}\quad 0 \le s \le \frac{1}{2} \\
0 &  \text{if}\quad s \ge \frac{1}{2}  \\
\end{cases}$$

and we set
\begin{equation} \label{test1}
\psi_{\varepsilon} :=
T_1 \circ \varphi_{\varepsilon}
\end{equation}
where we have extended  $\varphi_{\varepsilon}$ by zero outside $\mathcal{B}_{\varepsilon}^{\lambda}$. Clearly $\psi_{\varepsilon} \in C^{0,1}(\R^n),  0 \le \psi_{\varepsilon} \le 1 $ and

$$\int_{\mathcal{B}^{\lambda}_{\epsilon}} |\nabla \psi_{\varepsilon}
|^2 dx \leq 4 \int_{\mathcal{B}^{\lambda}_{\epsilon}} |\nabla
\varphi_{\varepsilon} |^2 dx  < 4 \varepsilon.$$

Now we  set $\gamma_\lambda:= \partial \Omega \cap T_{\lambda}$. Recalling that $\Omega$ is convex, it is easy to  deduce that
$\gamma_\lambda$ is made of two points in dimension two. If else $n\geq3$ then it follows that
$\gamma_\lambda$ is a smooth manifold of dimension $n-2$.
 Note in fact that locally $\partial\Omega$ is the zero level set of a smooth function $g(\cdot)$ whose gradient is not parallel to the $x_1$-direction since $\Omega$ is convex. Then it is sufficient to observe that locally  $ \partial \Omega \cap T_{\lambda}\equiv\{g(\lambda,x')=0\}$ and use the \emph{implicit function theorem} exploiting the fact that $\nabla_{x'}g(\lambda,x')\neq0$.
 This implies that
$\underset{\R^n}{\operatorname{Cap}_2}(\gamma_\lambda)=0$, see  e.g.
\cite{evans}.
So, as before, $\underset{\mathcal{I}^\lambda_{\tau}}{\operatorname{Cap}_2}(\gamma_\lambda)=0$
for any open neighborhood of $\gamma_\lambda$ and then
there exists $\varphi_{\tau} \in C^{\infty}_c (\mathcal{I}^\lambda_{\tau})$ such that
$\varphi_{\tau} \geq 1$ in a
neighborhood $\mathcal{I}^\lambda_{\sigma}$ with
 $\gamma_\lambda\subset\mathcal{I}^\lambda_{\sigma} \subset
\mathcal{I}^\lambda_{\tau}$. As above, we set
\begin{equation}\label{test2}\phi_{\tau} :=T_1 \circ \varphi_{\tau}
\end{equation}
where we have extended  $\varphi_{\tau}$ by zero outside $\mathcal{I}^\lambda_{\tau}$. Then, $ \phi_{\tau} \in C^{0,1}(\R^n),  0 \le \phi_{\tau} \le 1, \phi_{\tau} = 1$ outside $\mathcal{I}^\lambda_{\tau}, \phi_{\tau} = 0$ in $\mathcal{I}^\lambda_{\sigma}$ and

$$\int_{\R^n} |\nabla \phi_\tau
|^2 dx = \int_{\mathcal{I}^\lambda_{\tau}} |\nabla \phi_\tau |^2 dx
\leq 4  \int_{\mathcal{I}^\lambda_{\tau}} |\nabla \varphi_\tau |^2
dx  < 4 \tau.$$

\section{Proof of Theorem \ref{main}}\label{mainproofsec}
In the following we will  exploit the fact that $u_\lambda$ is a solution to:

\begin{equation}\label{debil2}
\int_{R_\lambda(\Omega)} \nabla u_\lambda\nabla
\varphi\,dx\,=\,\int_{R_\lambda(\Omega)}
f(x_\lambda,u_\lambda)\varphi\,dx\qquad\forall \varphi\in
C^{1}_c(R_\lambda(\Omega)\setminus R_\lambda(\Gamma))\,
\end{equation}
and we also observe that, for any  $a<\lambda<0$, the function $w_\lambda\,:=\,u-u_\lambda$ satisfies  $0 \le w_\lambda^+ \le u $ a.e. on  ${\Omega_{\lambda}}$ and so $w_\lambda^+ \in L^2(\Omega_{\lambda})$, since  $u \in C^0(\overline{\Omega_{\lambda}})$. To proceed further, we need the following two results

\begin{lem}\label{leaiuto} Let $\lambda \in (a,0)$ be such that $R_\lambda (\Gamma) \cap {\overline {\Omega}} = \emptyset$ and consider the function
$$\varphi\,:= \begin{cases}
\, w_\lambda^+ \phi_{\tau}^2  & \text{in}\quad\Omega_\lambda, \\
 0 &  \text{in}\quad \R^n\setminus \Omega_\lambda,
\end{cases}$$
where $\phi_{\tau}$ is as in \eqref{test2}. Then, $ \varphi \in C^{0,1}_c (\Omega) \cap C^{0,1}_c (R_{\lambda}(\Omega)), \, \varphi$ has compact support contained in $(\Omega \setminus \Gamma) \cap (R_{\lambda}(\Omega) \setminus R_{\lambda}(\Gamma)) \cap \{ x_N \le \lambda \}$ and
\begin{equation}\label{gradvarphi}
\nabla \varphi = \phi_\tau^2 (\nabla w_\lambda \chi_{supp(w_\lambda^+) \cap supp(\varphi)}) + 2 \phi_\tau
(w_\lambda^+ \chi_{supp(\varphi)}) \nabla \phi_\tau \quad {\text {a.e. on }} \, \, \Omega \cup R_{\lambda}(\Omega).
\end{equation}
If $\lambda \in (a,0)$ is such that $R_\lambda (\Gamma) \cap {\overline {\Omega}} \neq \emptyset$, the same conclusions hold true for the function
$$\varphi\,:= \begin{cases}
\, w_\lambda^+ \psi_\varepsilon^2 \phi_{\tau}^2  & \text{in}\quad\Omega_\lambda, \\
 0 &  \text{in}\quad \R^n \setminus \Omega_\lambda,
\end{cases}$$
where $\psi_\varepsilon$ is defined as in \eqref{test1} and $\phi_{\tau}$
as in \eqref{test2}. Furthermore, a.e. on $ \Omega \cup R_{\lambda}(\Omega)$,
\begin{equation}\label{gradvarphi2}
\nabla \varphi = \psi_\varepsilon^2 \phi_\tau^2 (\nabla w_\lambda \chi_{supp(w_\lambda^+) \cap supp(\varphi)}) +
2 (w_\lambda^+ \chi_{supp(\varphi)}) (\psi_\varepsilon^2 \phi_\tau \nabla \phi_\tau + \psi_\varepsilon \phi_\tau^2  \nabla  \psi_\varepsilon).
\end{equation}
{In particular, $\varphi \in C^{0,1}(\overline{\Omega_\lambda})$, $\varphi_{\vert _{\partial \Omega_\lambda}} = 0$   and so $ \varphi \in H^1_0 (\Omega_\lambda)$. }

\end{lem}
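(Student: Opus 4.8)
The plan is to verify the claimed properties of the test function $\varphi$ by assembling elementary facts about products of Lipschitz functions with compact support. First I would treat the case $R_\lambda(\Gamma)\cap\overline{\Omega}=\emptyset$. Here $\phi_\tau$ is, by construction \eqref{test2}, globally Lipschitz, takes values in $[0,1]$, equals $1$ outside the neighborhood $\mathcal{I}^\lambda_\tau$ of $\gamma_\lambda$, and vanishes on the smaller neighborhood $\mathcal{I}^\lambda_\sigma$. Since $w_\lambda^+\le u$ on $\Omega_\lambda$ and $u\in C^0(\overline{\Omega_\lambda})\cap C^{1,\alpha}_{loc}(\Omega\setminus\Gamma)$ (Remark \ref{standellest}), the function $w_\lambda=u-u_\lambda$ is locally Lipschitz on $\Omega_\lambda\setminus(\Gamma\cup R_\lambda(\Gamma))$, hence so is $w_\lambda^+$, and therefore $\varphi=w_\lambda^+\phi_\tau^2$ is locally Lipschitz there. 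The key point is the location of the support: on $\partial\Omega_\lambda$ we have either $x\in\partial\Omega$, where $u=0$ and so $w_\lambda^+\le u=0$, or $x\in T_\lambda\cap\Omega$, where $x=x_\lambda$ forces $w_\lambda=0$; in both situations $\varphi$ vanishes in a neighborhood within $\overline{\Omega_\lambda}$ (using that $\phi_\tau\equiv0$ near $\gamma_\lambda=\partial\Omega\cap T_\lambda$ to handle the corner). Extending by $0$ outside $\Omega_\lambda$ thus yields a function that is Lipschitz across $T_\lambda$ and across $\partial\Omega$, i.e. $\varphi\in C^{0,1}_c(\Omega)\cap C^{0,1}_c(R_\lambda(\Omega))$ with support contained in $(\Omega\setminus\Gamma)\cap(R_\lambda(\Omega)\setminus R_\lambda(\Gamma))\cap\{x_1\le\lambda\}$. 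Here I also use that $u$ is bounded near $\partial\Omega$ and that $R_\lambda(\Gamma)$ is at positive distance from $\overline\Omega$, so no singularity of $u_\lambda$ is met.

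For the gradient formula \eqref{gradvarphi} I would apply the Leibniz rule for Sobolev/Lipschitz products together with the standard fact that $\nabla w_\lambda^+=\nabla w_\lambda\,\chi_{\{w_\lambda>0\}}$ a.e., and that a Lipschitz function has zero gradient a.e. on the level set where it vanishes (so the product with $\chi_{supp(w_\lambda^+)}$ and with $\chi_{supp(\varphi)}$ introduces no error). Differentiating $w_\lambda^+\phi_\tau^2$ gives $\phi_\tau^2\nabla w_\lambda^+ + 2\phi_\tau w_\lambda^+\nabla\phi_\tau$, which is exactly \eqref{gradvarphi} after inserting the cut-off characteristic functions; the identity holds a.e. on $\Omega\cup R_\lambda(\Omega)$ because outside the support of $\varphi$ both sides vanish.

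The case $R_\lambda(\Gamma)\cap\overline{\Omega}\neq\emptyset$ is handled identically, with the extra cut-off $\psi_\varepsilon^2$ from \eqref{test1} inserted to kill the singular set $R_\lambda(\Gamma)$: $\psi_\varepsilon$ is globally Lipschitz, equals $0$ on the neighborhood $\mathcal{B}^\lambda_\delta$ of $R_\lambda(\Gamma)$ and $1$ outside $\mathcal{B}^\lambda_\varepsilon$. Then $\varphi=w_\lambda^+\psi_\varepsilon^2\phi_\tau^2$ vanishes near $R_\lambda(\Gamma)$ (through $\psi_\varepsilon$) and near $\gamma_\lambda$ (through $\phi_\tau$), so it is Lipschitz with compact support in the punctured region as before, and the product rule now produces the three-term expression \eqref{gradvarphi2}. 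The final assertion $\varphi\in H^1_0(\Omega_\lambda)$ follows because $\varphi\in C^{0,1}(\overline{\Omega_\lambda})$ with $\varphi\equiv0$ on $\partial\Omega_\lambda$, and a Lipschitz function on a bounded domain vanishing on the boundary belongs to $H^1_0$ (it is the uniform—hence $H^1$—limit of its own truncations, or simply of $(\varphi-1/k)^+$ type approximations). The main obstacle is the boundary/corner analysis near $\gamma_\lambda$: one must check carefully that the composition of the vanishing of $w_\lambda^+$ on $\partial\Omega$, the reflection identity on $T_\lambda$, and the support of $\phi_\tau$ genuinely forces $\varphi$ to vanish in a full neighborhood of $\partial\Omega_\lambda$ in $\R^n$, so that the zero extension stays Lipschitz; everything else is a routine application of the calculus of Lipschitz functions.
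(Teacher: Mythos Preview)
Your proposal is correct and follows the same strategy as the paper's proof: a local case analysis combining the $C^{1,\alpha}_{\mathrm{loc}}$ regularity of $u$ and $u_\lambda$ away from $\Gamma\cup R_\lambda(\Gamma)$ with the vanishing of $\phi_\tau$ near $\gamma_\lambda$ (and of $\psi_\varepsilon$ near $R_\lambda(\Gamma)$ in the second case), followed by the Leibniz rule for the gradient. One point to sharpen: to obtain that $\varphi$ vanishes in a \emph{full neighborhood} of a point $x\in\partial\Omega\cap\{x_1<\lambda\}$ (and hence that $\operatorname{supp}\varphi$ is compact in $\Omega$), the inequality $w_\lambda^+\le u=0$ at the boundary point itself is not sufficient; the paper uses that $u_\lambda(x)=u(x_\lambda)>0$ since $x_\lambda\in\Omega$, so that $w_\lambda(x)<0$ strictly and, by continuity of $w_\lambda$ there, $w_\lambda^+\equiv 0$ in a neighborhood of $x$---this is exactly the ``boundary/corner analysis'' you rightly flag as the delicate step.
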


\begin{proof}
Let us consider the case when $\lambda \in (a,0)$ is such that $R_\lambda (\Gamma) \cap {\overline {\Omega}} \neq \emptyset$ (the other case being similar and easier). We first prove that for every $x \in \Omega$  there is an open ball $B_x$ centered at $x$, such that $ \overline{B_x} \subset \Omega$ and $\varphi \in C^{0,1}(\overline{B_x})$,
and then that there exists $\eta >0$ such that $supp(\varphi) $ is contained in the compact set $\{ x \in \Omega : dist(x, \partial \Omega) \ge \eta \} \cap \{ x_N \le \lambda\} \cap (\R^n \setminus V) \subset (\Omega \setminus \Gamma) \cap (R_{\lambda}(\Omega) \setminus R_{\lambda}(\Gamma))$, where $V$ is any open set contained in the neighborhood $\mathcal{B}_{\delta}^{\lambda}$ appearing in the construction of $ \psi_\varepsilon $.

If $x \in \Omega \cap \{ x_N > \lambda\} $ then $ \varphi \equiv 0$ in an open neighbourhood of $x$ and so  $\varphi \in C^{0,1}(\overline{B_x})$ for a suitable ball $B_x$.
If $x \in \Omega \cap T_{\lambda}$ then we can find a small open ball $B_x \subset \Omega $ such that $B_x \cap (\partial \Omega \cup R_\lambda (\Gamma) ) =\emptyset$. Therefore, both $u$ and $u_{\lambda}$ belong to $ C^1(\overline {B_x} \cap \{ x_N \le  \lambda\})$ and so, $\varphi \in C^{0,1}(\overline {B_x}\cap \{ x_N \le \lambda\})$, thanks to the lipschitz character of $ \phi_{\tau}$ and $\psi_\varepsilon$.
On the other hand we also have that $\varphi \equiv 0 $ on $\overline {B_x}\cap T_{\lambda}$, by definition of $w_{\lambda}$. Thus  $\varphi \in C^{0,1}(\overline{B_x})$ and we are done also in this case.
If $ x \in R_\lambda (\Gamma) \cap \Omega$ then $ \varphi \equiv 0$ in an open neighbourhood of $x$ by definition of
$\psi_\varepsilon$ and so  $\varphi \in C^{0,1}(\overline{B_x})$ for a suitable ball $B_x$.
Finally, if $x \in \Omega_{\lambda} \setminus R_\lambda (\Gamma)$ then, as before, we can find a small open ball $B_x$ such that $\overline {B_x} \subset  \Omega_{\lambda} \setminus R_\lambda (\Gamma)$.  In this case, both $u$ and $u_{\lambda}$ belong to $ C^1(\overline {B_x})$. This yields $w_{\lambda} \in C^{0,1}(\overline{B})$ and so is $\varphi$, again thanks to the lipschitz character of $ \phi_{\tau}$ and $\psi_\varepsilon$.

To prove the second part of the claim we observe that $ \varphi \equiv 0$ on $\Omega \setminus \Omega_{\lambda}$ and that, for any point $x$ of the compact set $ (\partial \Omega) \cap \{ x_N \le \lambda\}$ there is a small open ball $B_x$, centered at $x$, such that $ \varphi = 0 $ on $B_x \cap \Omega$. The latter clearly holds for any point of $\gamma_{\lambda}$, by definition of  $\phi_{\tau}$, and for any point of $\partial \Omega \cap R_\lambda (\Gamma) $, by definition of $\psi_\varepsilon$. It is also true for any $x \in (\partial \Omega) \cap \{ x_N  <\lambda\}$, since $ u - u_{\lambda} $ is well-defined, continuous and negative on the set  $[(\partial \Omega) \cap \{ x_N  <\lambda\}] \setminus R_\lambda (\Gamma)$. The arguments above immediately yield that $\varphi \in  C^{0,1}_{c}(\Omega) $ and the formula \eqref{gradvarphi2}. A similar argument also shows that $\varphi \in  C^{0,1}_{c}(R_{\lambda}(\Omega))$.


To compute $\nabla \varphi$ we also took into consideration the Remark \ref{standellest}.

\end{proof}

\begin{lem}\label{leaiuto2}
Under the assumptions of Theorem \ref{main}, let $a<\lambda<0$. Then $ w_\lambda^+\in H^1_0(\Omega_\lambda)$ and
\begin{equation}\nonumber
\int_{\Omega_\lambda}|\nabla w_\lambda^+|^2\,dx\leq c(f,{\vert \Omega \vert}, \|u\|_{L^\infty(\Omega_\lambda)}),
\end{equation}
{where $\vert \Omega \vert$ denotes the $n-$dimensional Lebesgue measure of $ \Omega$. }
\end{lem}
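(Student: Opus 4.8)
The plan is to use the function $\varphi$ built in Lemma \ref{leaiuto} as a test function in both weak formulations \eqref{debil1} and \eqref{debil2}, subtract them, and then run a Caccioppoli-type estimate in which the two cut-off families $\phi_\tau,\psi_\varepsilon$ are sent to $1$ only at the very end. Fix $\lambda\in(a,0)$ and take $\varphi$ as in Lemma \ref{leaiuto} (I treat the case $R_\lambda(\Gamma)\cap\overline\Omega\neq\emptyset$; the other one is identical with $\psi_\varepsilon\equiv1$). By that Lemma $\varphi\in C^{0,1}_c(\Omega)\cap C^{0,1}_c(R_\lambda(\Omega))$ with support inside $(\Omega\setminus\Gamma)\cap(R_\lambda(\Omega)\setminus R_\lambda(\Gamma))\cap\{x_1\le\lambda\}$, so a routine density argument makes it admissible in \eqref{debil1} and in \eqref{debil2}; moreover its support sits in $\Omega_\lambda$, and $R_\lambda(\Omega_\lambda)\subset\Omega$ because $\lambda<0$ and $\Omega$ is convex and symmetric in $x_1$. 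Subtracting the two identities then gives
\[
\int_{\Omega_\lambda}\nabla w_\lambda\cdot\nabla\varphi\,dx=\int_{\Omega_\lambda}\big(f(x,u)-f(x_\lambda,u_\lambda)\big)\varphi\,dx .
\]

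For the right-hand side I would split $f(x,u)-f(x_\lambda,u_\lambda)=\big(f(x,u)-f(x,u_\lambda)\big)+\big(f(x,u_\lambda)-f(x_\lambda,u_\lambda)\big)$. On the set $\{w_\lambda^+>0\}$ one has $0<u_\lambda<u\le M:=\|u\|_{L^\infty(\Omega_\lambda)}$, so the Lipschitz-from-above part of $(h_f)$, used with the compact set $K:=\overline{\Omega_\lambda}\subset\overline\Omega\setminus\Gamma$ — here one crucially needs $\Gamma\subset\{x_1=0\}$ together with $\lambda<0$ — bounds the first bracket there by $C(K,M)\,w_\lambda^+$. For the second bracket, writing $x=(x_1,x')$ with $x_1<\lambda<0$, a short case distinction on the sign of $2\lambda-x_1$ gives $f(x,u_\lambda)-f(x_\lambda,u_\lambda)\le 0$: if $2\lambda-x_1\le0$ then $x_1<2\lambda-x_1\le0$ and one uses directly the monotonicity of $f(\cdot,s)$ in the $x_1$-direction on $\Omega\cap\{x_1<0\}$; if $2\lambda-x_1>0$ one first replaces $f(x_\lambda,u_\lambda)$ by $f\big((x_1-2\lambda,x'),u_\lambda\big)$ using the symmetry of $f$ in $x_1$, and then applies the monotonicity since $x_1<x_1-2\lambda<0$ (all points involved lie in $\Omega$, again by convexity and symmetry of $\Omega$). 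Since $\varphi\ge0$ and $0\le\psi_\varepsilon,\phi_\tau\le1$, this yields
\[
\int_{\Omega_\lambda}\big(f(x,u)-f(x_\lambda,u_\lambda)\big)\varphi\,dx\le C(K,M)\int_{\Omega_\lambda}(w_\lambda^+)^2\,dx\le C(K,M)\,\|u\|_{L^\infty(\Omega_\lambda)}^2\,|\Omega| .
\]

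For the left-hand side I would substitute \eqref{gradvarphi2} and isolate the good term $I:=\int_{\Omega_\lambda}\psi_\varepsilon^2\phi_\tau^2|\nabla w_\lambda^+|^2\,dx$, which is finite for each fixed $\varepsilon,\tau$: the cut-offs $\psi_\varepsilon,\phi_\tau$ kill the integrand near $R_\lambda(\Gamma)$ and near $\gamma_\lambda$ respectively, while $w_\lambda^+\equiv0$ near $(\partial\Omega\cap\{x_1<\lambda\})\setminus R_\lambda(\Gamma)$ (there $w_\lambda$ is continuous and negative, as noted in the proof of Lemma \ref{leaiuto}), so the integrand is supported in a compact subset of $(\Omega\setminus\Gamma)\cap(R_\lambda(\Omega)\setminus R_\lambda(\Gamma))$, where $w_\lambda^+\in H^1_{loc}$. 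The two mixed terms in \eqref{gradvarphi2} are estimated by Cauchy--Schwarz and Young's inequality, each being $\le\tfrac14 I+C\,\|u\|_{L^\infty(\Omega_\lambda)}^2\int_{\R^n}|\nabla\phi_\tau|^2\,dx$, respectively $\le\tfrac14 I+C\,\|u\|_{L^\infty(\Omega_\lambda)}^2\int_{\R^n}|\nabla\psi_\varepsilon|^2\,dx$, hence $\le\tfrac14 I+C\,\|u\|_{L^\infty(\Omega_\lambda)}^2(\tau+\varepsilon)$ by the bounds $\int_{\R^n}|\nabla\phi_\tau|^2<4\tau$ and $\int_{\R^n}|\nabla\psi_\varepsilon|^2<4\varepsilon$ from Section \ref{notations}. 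Absorbing $\tfrac12 I$ on the left then gives $I\le C\,\|u\|_{L^\infty(\Omega_\lambda)}^2(\tau+\varepsilon)+2C(K,M)\,\|u\|_{L^\infty(\Omega_\lambda)}^2\,|\Omega|$. Letting $\tau\to0$ and then $\varepsilon\to0$: since $\psi_\varepsilon^2\phi_\tau^2\to1$ a.e.\ on $\Omega_\lambda$, Fatou's lemma yields $\int_{\Omega_\lambda}|\nabla w_\lambda^+|^2\,dx\le c$ with $c=c(f,|\Omega|,\|u\|_{L^\infty(\Omega_\lambda)})$, in particular $\nabla w_\lambda^+\in L^2(\Omega_\lambda)$; together with $0\le w_\lambda^+\le u\in L^\infty(\Omega_\lambda)$ this gives $w_\lambda^+\in H^1(\Omega_\lambda)$. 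To reach $H^1_0(\Omega_\lambda)$, observe that by \eqref{gradvarphi2} and the same cut-off bounds the admissible functions $\varphi$ converge to $w_\lambda^+$ in $H^1(\Omega_\lambda)$ as $\tau,\varepsilon\to0$ (the mixed terms tend to $0$ in $L^2$, while $\psi_\varepsilon^2\phi_\tau^2\nabla w_\lambda^+\to\nabla w_\lambda^+$ in $L^2$ by dominated convergence, now that $\nabla w_\lambda^+\in L^2$), and each $\varphi$ lies in $H^1_0(\Omega_\lambda)$ by Lemma \ref{leaiuto}.

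The step I expect to be the main obstacle is the bookkeeping around this absorption argument: one has to be sure that $I<\infty$ a priori for fixed cut-offs (which relies precisely on the cut-offs annihilating the integrand near $R_\lambda(\Gamma)$ and near $\gamma_\lambda$, and on $w_\lambda^+$ vanishing near the portion of $\partial\Omega$ lying below $T_\lambda$), and one has to get the sign of the nonlinearity contribution right — this is where both the $x_1$-monotonicity and the $\{x_1=0\}$-symmetry of $f$ are genuinely needed, through the case distinction on the sign of $2\lambda-x_1$.
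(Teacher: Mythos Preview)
Your proposal is correct and follows essentially the same route as the paper: test with the $\varphi$ from Lemma~\ref{leaiuto}, subtract the two weak formulations, use $(h_f)$ with $K=\overline{\Omega_\lambda}$ to control the zero-order term, absorb the cross terms via Young, pass to the limit with Fatou, and finally upgrade to $H^1_0(\Omega_\lambda)$ by showing $\varphi\to w_\lambda^+$ in $H^1$. The one place where you are actually more explicit than the paper is the inequality $f(x,u_\lambda)\le f(x_\lambda,u_\lambda)$: the paper simply invokes ``the monotonicity properties of $f(\cdot,s)$, see $(h_f)$'', whereas your case distinction on the sign of $2\lambda-x_1$ (using symmetry of $f$ across $\{x_1=0\}$ when $2\lambda-x_1>0$) is precisely what is needed to justify that line, since $(h_f)$ only assumes monotonicity in $\Omega\cap\{x_1<0\}$.
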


\begin{proof}

We first prove that $\nabla w_\lambda \chi_{supp(w_\lambda^+)} \in L^2(\Omega_{\lambda})$ and then that the distributional gradient of $  w_\lambda^+$ is given by $\nabla w_\lambda  \chi_{supp(w_\lambda^+)} $. We do this only for the case in which $ \lambda $ is such that $R_\lambda (\Gamma) \cap {\overline {\Omega}} \neq \emptyset$, the other case being similar and easier. For $\psi_\varepsilon$ as in \eqref{test1} and $\phi_{\tau}$
as in \eqref{test2}, we consider the function $\varphi$ defined in Lemma \ref{leaiuto}. In view of the properties of $\varphi$, stated in Lemma \ref{leaiuto}, and a standard density argument, we can use $\varphi$ as test function in
\eqref{debil1} and \eqref{debil2} so that, subtracting, we get

\begin{equation}\nonumber
\begin{split}
\int_{\Omega_\lambda}|\nabla w_\lambda \chi_{supp(w_\lambda^+)} |^2\psi_\varepsilon^2
\phi_{\tau}^2 \,dx&=
 -2\int_{\Omega_\lambda}\nabla w_\lambda \nabla \psi_\varepsilon w_\lambda^+ \psi_\varepsilon \phi_{\tau}^2
 \,dx - 2\int_{\Omega_\lambda}\nabla w_\lambda \nabla \phi_\tau w_\lambda^+ \psi_\varepsilon^2 \phi_{\tau}
 \,dx\\
 &+\int_{\Omega_\lambda} \left(f(x,u)-f(x_\lambda,u_\lambda)\right) w_\lambda^+ \psi_\varepsilon^2
\phi_{\tau}^2 \,dx\,\\
  &\leq -2\int_{\Omega_\lambda}\nabla w_\lambda \nabla \psi_\varepsilon w_\lambda^+ \psi_\varepsilon \phi_{\tau}^2
 \,dx - 2\int_{\Omega_\lambda}\nabla w_\lambda \nabla \phi_\tau w_\lambda^+ \psi_\varepsilon^2 \phi_{\tau}
\,dx\\
 &+\int_{\Omega_\lambda} (f(x,u)-f(x,u_\lambda)) w_\lambda^+ \psi_\varepsilon^2
\phi_{\tau}^2 \,dx.\\
\end{split}
\end{equation}
Here we also used the monotonicity properties of $f(\cdot,s)$, see $(h_f)$.
Exploiting Young's inequality we get that
\begin{equation}\label{stima1}
\begin{split}
\int_{\Omega_\lambda}|\nabla w_\lambda \chi_{supp(w_\lambda^+)} |^2\psi_\varepsilon^2
\phi_{\tau}^2 \,dx &\leq \frac{1}{4}\int_{\Omega_\lambda}|\nabla
w_\lambda \chi_{supp(w_\lambda^+)}|^2\psi_\varepsilon^2 \phi_{\tau}^2 \,dx
+4\int_{\Omega_\lambda}|\nabla \psi_\varepsilon|^2 (w_\lambda^+)^2
\phi_{\tau}^2
 \,dx \\
&+\frac{1}{4}\int_{\Omega_\lambda}|\nabla
w_\lambda \chi_{supp(w_\lambda^+)}|^2\psi_\varepsilon^2 \phi_{\tau}^2 \,dx +
4\int_{\Omega_\lambda}|\nabla \phi_\tau|^2 (w_\lambda^+)^2
\psi_\varepsilon^2\, dx\\
 &+\int_{\Omega_\lambda} (f(x,u)-f(x,u_\lambda)) w_\lambda^+ \psi_\varepsilon^2
\phi_{\tau}^2 \,dx.\\
\end{split}
\end{equation}
Now we observe that the last integral is actually computed on the set
$\{ x \in \Omega_{\lambda} \setminus R_\lambda (\Gamma) : u(x) > u_\lambda(x) >0 \} \subset {\overline {\Omega_\lambda}} \subset \overline{\Omega} \setminus \Gamma$ and so, we can apply condition $(h_f)$ with the compact set $ K= {\overline {\Omega_\lambda}} $ and $ M = \|u\|_{L^\infty(\Omega_\lambda)}$.  We get therefore that
\begin{equation}\label{stima2}
\begin{split}
\int_{\Omega_\lambda} (f(x,u)-f(x,u_\lambda)) w_\lambda^+ \psi_\varepsilon^2
\phi_{\tau}^2 \,dx
& \le c(f, \|u\|_{L^\infty(\Omega_\lambda)})\int_{\Omega_\lambda}(w_\lambda^+)^2 \psi_\varepsilon^2
\phi_{\tau}^2\,dx
\end{split}
\end{equation}
and so, from \eqref{stima1}, we infer that
\begin{equation}\label{stima3}
\begin{split}
\int_{\Omega_\lambda}|\nabla w_\lambda \chi_{supp(w_\lambda^+)}|^2\psi_\varepsilon^2
\phi_{\tau}^2 \,dx &\leq 8 \int_{\Omega_\lambda}|\nabla \psi_\varepsilon|^2 (w_\lambda^+)^2
\phi_{\tau}^2
 \,dx + 8 \int_{\Omega_\lambda}|\nabla \phi_\tau|^2 (w_\lambda^+)^2
\psi_\varepsilon^2\, dx\\
 &+2c(f, \|u\|_{L^\infty(\Omega_\lambda)})\int_{\Omega_\lambda}(w_\lambda^+)^2 \psi_\varepsilon^2
\phi_{\tau}^2\,dx.\\
\end{split}
\end{equation}
Taking into account the properties of
$\psi_\varepsilon$ and $\phi_\tau$, we see that
\begin{equation}\label{bbeg}
\int_{\Omega_\lambda}|\nabla \psi_\varepsilon|^2\,dx=
\int_{\Omega_\lambda\cap(\mathcal B_\varepsilon^\lambda\setminus
\mathcal B_{\delta}^\lambda)}|\nabla \psi_\varepsilon|^2\,dx < 4
\varepsilon,
\end{equation}
\begin{equation}\label{bbeg}
\int_{\Omega_\lambda}|\nabla \phi_\tau|^2\,dx=
\int_{\Omega_\lambda\cap(\mathcal I^\lambda_\tau \setminus \mathcal
I^\lambda_{\sigma})}|\nabla \phi_\tau|^2\,dx < 4 \tau,
\end{equation}
which combined with $0\leq w_\lambda^+\leq u$,  immediately lead to
\begin{equation}\nonumber
\begin{split}
\int_{\Omega_\lambda}|\nabla w_\lambda \chi_{supp(w_\lambda^+)}|^2\psi_\varepsilon^2
\phi_{\tau}^2 \,dx \leq 32(\varepsilon +\tau) ||u||^2_{L^\infty
(\Omega_\lambda)} +2 c(f, \|u\|_{L^\infty(\Omega_\lambda)}) ||u||^2_{L^\infty
(\Omega_\lambda)} \vert \Omega \vert \,.
\end{split}
\end{equation}
By Fatou Lemma, as $\varepsilon$ and $\tau$ tend to zero, we deduce that $\nabla w_\lambda \chi_{supp(w_\lambda^+)}
\in L^2(\Omega_{\lambda})$.  To conclude we note that $ \varphi \to  w_\lambda^+$ in $\L^2(\Omega)$,  as $\varepsilon$ and $\tau$ tend to zero, by definition of $ \varphi $. Also, $\nabla \varphi \to \nabla w_\lambda \chi_{supp(w_\lambda^+)}$ in $L^2(\Omega_{\lambda})$, by \eqref{gradvarphi2}. Therefore, $\nabla w_\lambda \chi_{supp(w_\lambda^+)}$ is the distributional gradient of $\nabla w_\lambda^+ $ and $ w_\lambda^+$ in $H^1_0(\Omega_\lambda)$, since {$\varphi \in H^1_0(\Omega_\lambda)$} again by Lemma \ref{leaiuto}.  Which concludes the proof.


\end{proof}

\begin{proof}[Proof of Theorem \ref{main}]

We define
\begin{equation}\nonumber
\Lambda_0=\{a<\lambda<0 : u\leq
u_{t}\,\,\,\text{in}\,\,\,\Omega_t\setminus
R_t(\Gamma)\,\,\,\text{for all $t\in(a,\lambda]$}\}
\end{equation}
and to start with the moving plane procedure, we have to prove
that

\emph{Step 1 :  $\Lambda_0 \neq \emptyset$}. Fix a $ \lambda_0 \in (a,0)$ such that
$R_{\lambda_0}(\Gamma) \subset \Omega^c$, then for every $a< \lambda < \lambda_0$, we also have that
$R_\lambda(\Gamma)\subset \Omega^c$. 
For any $ \lambda$ in this set we consider, on the domain $\Omega$, the function $\varphi\,:=\, w_\lambda^+ \phi_{\tau}^2 \chi_{\Omega_\lambda},$ where $\phi_{\tau}$ is as in \eqref{test2} and we proceed as in the proof of Lemma \ref{leaiuto2}. That is, by Lemma \ref{leaiuto} and a density argument, we can use $\varphi$ as test function in
\eqref{debil1} and \eqref{debil2} so that, subtracting, we get

\begin{equation}\nonumber
\begin{split}
\int_{\Omega_\lambda}|\nabla w_\lambda^+|^2 \phi_{\tau}^2 \,dx&=
 - 2\int_{\Omega_\lambda} \nabla w_\lambda^+\nabla \phi_\tau w_\lambda^+ \phi_{\tau}
 \,dx +\int_{\Omega_\lambda} \left(f(x,u)-f(x_\lambda,u_\lambda)\right)
w_\lambda^+ \phi_{\tau}^2 \,dx\,\\
 &\leq- 2\int_{\Omega_\lambda} \nabla w_\lambda^+\nabla \phi_\tau w_\lambda^+  \phi_{\tau}
 \,dx +\int_{\Omega_\lambda} (f(x,u)-f(x,u_\lambda)) w_\lambda^+ \phi_{\tau}^2 \,dx.
 \end{split}
\end{equation}

Exploiting Young's inequality and the assumption $(h_f)$, with $K ={\overline {\Omega_{\lambda_0}}}$ and $ M = ||u||^2_{L^\infty
(\Omega_{\lambda_0})}$,  we then get that
\begin{equation}\nonumber
\begin{split}
\int_{\Omega_\lambda}|\nabla w_\lambda^+|^2 \phi_{\tau}^2 \,dx &\leq
\frac{1}{2}\int_{\Omega_\lambda}|\nabla w_\lambda^+|^2 \phi_{\tau}^2
\,dx + 2\int_{\Omega_\lambda}|\nabla \phi_\tau|^2 (w_\lambda^+)^2 dx\\
&+ c(f,\|u\|_{L^\infty(\Omega_{\lambda_0})})
\int_{\Omega_\lambda}(w_\lambda^+)^2 \phi_{\tau}^2\,dx.
\end{split}
\end{equation}
Taking into account the properties of $\phi_\tau$, we see that
\begin{equation}\label{bbeg}
\int_{\Omega_\lambda}|\nabla \phi_\tau|^2 (w_\lambda^+)^2 dx \leq \|
u \|_{L^\infty (\Omega_\lambda)}^2 \int_{\Omega_\lambda \cap (\mathcal
I^\lambda_\tau \setminus \mathcal I^\lambda_{\sigma})}|\nabla
\phi_\tau|^2\,dx\leq 4 \| u \|_{L^\infty(\Omega_\lambda)}^2 \cdot
\tau.
\end{equation}
We therefore deduce that
\begin{equation}\nonumber
\begin{split}
\int_{\Omega_\lambda}|\nabla w_\lambda^+|^2 \phi_{\tau}^2 \,dx \leq
16 ||u||_{L^\infty (\Omega_\lambda)} \cdot \tau +
2 c(f,\|u\|_{L^\infty(\Omega_{\lambda_0})})
\int_{\Omega_\lambda}(w_\lambda^+)^2 \phi_{\tau}^2\,dx.
\end{split}
\end{equation}
By Fatou Lemma, as $\tau$ tend to, zero we have
\begin{equation}\label{ff}
\begin{split}
\int_{\Omega_\lambda}|\nabla w_\lambda^+|^2  \,dx &\leq
2 c(f,\|u\|_{L^\infty(\Omega_{\lambda_0})}) \int_{\Omega_\lambda}(w_\lambda^+)^2 \, dx \\
& \leq 2 c(f,\|u\|_{L^\infty(\Omega_{\lambda_0})}) c_p^2(\Omega_\lambda)
\int_{\Omega_\lambda}|\nabla w_\lambda^+|^2  \,dx,
  \end{split}
\end{equation}
where $c_p(\cdot)$ is the Poincar\'e constant (in the Poincar\'e inequality in $H^1_0(\Omega_\lambda)$).
Since $c_p^2(\Omega_\lambda) \to 0$ as $ \lambda \to a$, we can find $ \lambda_1 \in (a, \lambda_0)$, such that
$$\forall \lambda \in (a, \lambda_1) \qquad  2 c(f,\|u\|_{L^\infty(\Omega_{\lambda_0})}) c_p^2(\Omega_\lambda)< \frac{1}{2}\,, $$ so that by \eqref{ff}, we
deduce that
$$ \forall \lambda \in (a, \lambda_1) \qquad \int_{\Omega_\lambda}|\nabla w_\lambda^+|^2  \,dx \leq 0,$$ proving
that $u \leq u_\lambda$ in $\Omega_\lambda \setminus R_\lambda(\Gamma)$ for $\lambda$ close to $a$, which implies the desired conclusion  $\Lambda_0 \neq \emptyset$.

\noindent Now we can set
 \begin{equation}\nonumber
\lambda_0=\sup\,\Lambda_0.
\end{equation}

\emph{Step 2: here we show that $\lambda_0=0$.} To this end we assume that
$\lambda_0<0$ and we reach a contradiction by proving that $u\leq
u_{\lambda_0+\nu}$ in $\Omega_{\lambda_0+\nu}\setminus
R_{\lambda_0+\nu}(\Gamma)$ for any $0<\nu<\bar\nu$ for some small
$\bar\nu>0$. By continuity we know that $u\leq u_{\lambda_0}$ in
$\Omega_{\lambda_0}\setminus R_{\lambda_0}(\Gamma)$.
{Since $\Omega$ is convex in the $x_1-$direction and the set $ R_{\lambda_0}(\Gamma)$ lies in the hyperplane of equation $\{\, x_1 = - 2 \lambda_0 \, \}$, we see that $\Omega_{\lambda_0}\setminus R_{\lambda_0}(\Gamma)$ is open and connected. Therefore, by the strong maximum principle we deduce that
$u< u_{\lambda_0}$ in $\Omega_{\lambda_0}\setminus R_{\lambda_0}(\Gamma)$ (here we have also used that
$ u, u_{\lambda_0}\in C^1(\Omega_{\lambda_0}\setminus R_{\lambda_0}(\Gamma)) $ by  Remark \ref{standellest}, as well as the assumption $(h_f)$.) }

{Now, note that for $K\subset \Omega_{\lambda_0}\setminus R_{\lambda_0}(\Gamma)$, there is $\nu=\nu(K,\lambda_0)>0$, sufficiently small, such that $K \subset \Omega_{\lambda} \setminus R_{\lambda}(\Gamma)$ for every $ \lambda \in [\lambda_0, \lambda_0 + \nu].$ Consequently $u$ and $u_{\lambda}$ are well defined on $K$ for every $ \lambda \in [\lambda_0, \lambda_0 + \nu].$ Hence, by the uniform continuity of the function $ g(x,\lambda) := u(x) - u(2\lambda-x_1,x^{'}) $ on the compact set $K \times [\lambda_0, \lambda_0 + \nu]$ we can ensure that  $K \subset \Omega_{\lambda_0 + \nu} \setminus R_{\lambda_0 + \nu}(\Gamma)$ and
$u< u_{\lambda_0+\nu}$ in $K$ for any $0 \le \nu<\bar\nu$, for some $\bar\nu = \bar\nu(K,\lambda_0)>0$ small. Clearly we can also assume that $\bar\nu < \frac{\vert \lambda_0 \vert}{4}. $}


{Let us} consider $\psi_\varepsilon$ constructed
in such a way that it vanishes in a neighborhood  of $R_{\lambda_0 +
\nu}(\Gamma)$ and $\phi_{\tau}$ constructed in such a way it
vanishes in a neighborhood  of $\gamma_{\lambda_0+\nu}=\partial
\Omega \cap T_{\lambda_0+ \nu}$. {As swown in the proof of lemma \ref{leaiuto2}, the functions}
$$\varphi\,:= \begin{cases}
\, \, w_{\lambda_0+\nu}^+\psi_\varepsilon^2 \phi_{\tau}^2 \, & \text{in}\quad {\Omega_{\lambda_0+\nu}} \\
 0 &  \text{in}\quad {\R^n \setminus \Omega_{\lambda_0+\nu}}
\end{cases}$$
{are such that $ \varphi \to  w_{\lambda_0+\nu}^+$ in $H^1_0(\Omega_{\lambda_0+\nu})$,  as $\varepsilon$ and $\tau$ tend to zero. Moreover, $\varphi \in C^{0,1}(\overline{\Omega_{\lambda_0+\nu}})$ and $\varphi_{\vert _{\partial \Omega_{\lambda_0+\nu}}} = 0$, by Lemma \ref{leaiuto}, and $ \varphi = 0 $ on an open neighborhood of $K$, by the above argument.
Therefore, $ \varphi \in H^1_0 (\Omega_{\lambda_0+\nu} \setminus K)$ and thus, also $w_{\lambda_0+\nu}^+$ belongs to  $H^1_0(\Omega_{\lambda_0+\nu} \setminus K)$. We also note that $ \nabla w_{\lambda_0+\nu}^+ = 0$ on an open neighborhood of $K$.}

\noindent {Now we argue as in Lemma \ref{leaiuto2} and we plug $\varphi$ as test function in
\eqref{debil1} and \eqref{debil2} so that, by subtracting, we get  }

{\begin{equation}\nonumber
\begin{split}
\int_{\Omega_{\lambda_0+\nu}} |\nabla
w_{\lambda_0+\nu}^+|^2\psi_\varepsilon^2 \phi_{\tau}^2\,dx&\leq
-2\int_{\Omega_{\lambda_0+\nu}}\nabla w_{\lambda_0+\nu} \nabla \psi_\varepsilon w_{\lambda_0+\nu}^+ \psi_\varepsilon \phi_{\tau}^2 \,dx \\
&- 2\int_{\Omega_{\lambda_0+\nu}}\nabla
w_{\lambda_0+\nu} \phi_\tau w_{\lambda_0+\nu}^+
\psi_\varepsilon^2 \phi_{\tau}
 \,dx\\
 &+ \int_{\Omega_{\lambda_0+\nu}} (f(x,u)-f(x,u_\lambda)) w_{\lambda_0+\nu}^+ \psi_\varepsilon^2
\phi_{\tau}^2 \,dx\\
 \end{split}
\end{equation}
where we also use the monotonicity of $f(\cdot,s)$ in the
$x_1$-direction.
Therefore, taking into account the properties of $ w_{\lambda_0+\nu}^+$ and $ \nabla w_{\lambda_0+\nu}^+$ we also have }

\begin{equation}\nonumber
\begin{split}
\int_{\Omega_{\lambda_0+\nu}\setminus K}|\nabla
w_{\lambda_0+\nu}^+|^2\psi_\varepsilon^2 \phi_{\tau}^2\,dx&\leq
-2\int_{\Omega_{\lambda_0+\nu}\setminus K}\nabla w_{\lambda_0+\nu}^+\nabla \psi_\varepsilon w_{\lambda_0+\nu}^+ \psi_\varepsilon \phi_{\tau}^2 \,dx \\
&- 2\int_{\Omega_{\lambda_0+\nu}\setminus K}\nabla
w_{\lambda_0+\nu}^+\nabla \phi_\tau w_{\lambda_0+\nu}^+
\psi_\varepsilon^2 \phi_{\tau}
 \,dx\\
 &+ \int_{\Omega_{\lambda_0+\nu}\setminus K} (f(x,u)-f(x,u_\lambda)) w_{\lambda_0+\nu}^+ \psi_\varepsilon^2
\phi_{\tau}^2 \,dx.\\
  \end{split}
\end{equation}
Furthermore, since $f$ is locally uniformly
Lipschitz continuous from above, we deduce that

\begin{equation}\label{jkdfh}
\begin{split}
\int_{\Omega_{\lambda_0+\nu}\setminus K}|\nabla
w_{\lambda_0+\nu}^+|^2\psi_\varepsilon^2 \phi_{\tau}^2\,dx&\leq
2\int_{\Omega_{\lambda_0+\nu}\setminus K}|\nabla w_{\lambda_0+\nu}^+| |\nabla \psi_\varepsilon| w_{\lambda_0+\nu}^+ \psi_\varepsilon \phi_{\tau}^2 \,dx \\
&+ 2\int_{\Omega_{\lambda_0+\nu}\setminus K} |\nabla
w_{\lambda_0+\nu}^+| |\nabla \phi_\tau| w_{\lambda_0+\nu}^+
\psi_\varepsilon^2 \phi_{\tau}
 \,dx\\
 &+ c(f,||u||_{L^\infty(\Omega_{\lambda_0 + {\frac{\vert \lambda_0 \vert}{4}} })}) \int_{\Omega_{\lambda_0+\nu}\setminus K}  (w_{\lambda_0+\nu}^+)^2 \psi_\varepsilon^2
\phi_{\tau}^2 \,dx.\\
  \end{split}
\end{equation}

Now, as in the proof of Lemma \ref{leaiuto2}, we use Young's
inequality to deduce that
\begin{equation}
\begin{split}
\int_{\Omega_{\lambda_0+\nu}\setminus K}
\vert \nabla w_{\lambda_0+\nu}^+|^2\psi_\varepsilon^2
\phi_{\tau}^2 \,dx &\leq 8 \int_{\Omega_{\lambda_0+\nu}\setminus K}|\nabla \psi_\varepsilon|^2 (w_{\lambda_0+\nu}^+)^2 \phi_{\tau}^2
\,dx \\
&+ 8 \int_{\Omega_{\lambda_0+\nu}\setminus K}|\nabla \phi_\tau|^2
(w_{\lambda_0+\nu}^+)^2
\psi_\varepsilon^2\, dx\\
 &+2c(f, ||u||_{L^\infty(\Omega_{\lambda_0+ {\frac{\vert \lambda_0 \vert}{4}} })}) \int_{\Omega_{\lambda_0+\nu} \setminus K}(w_\lambda^+)^2 \psi_\varepsilon^2
\phi_{\tau}^2\,dx,\\
\end{split}
\end{equation}
 which in turns yields
 \begin{equation}
\begin{split}
\int_{\Omega_{\lambda_0+\nu}\setminus K}
\vert \nabla w_{\lambda_0+\nu}^+|^2\psi_\varepsilon^2
\phi_{\tau}^2 \,dx &\leq 32 ||u||^2_{L^\infty(\Omega_{\lambda_0+ \bar\nu})} (\epsilon +\tau)\\
&+2c(f, ||u||_{L^\infty(\Omega_{\lambda_0+ {\frac{\vert \lambda_0
\vert}{4}} })}) \int_{\Omega_{\lambda_0+\nu} \setminus
K}(w_\lambda^+)^2 \psi_\varepsilon^2
\phi_{\tau}^2\,dx.\\
\end{split}
\end{equation}
Passing to the limit, as $(\epsilon, \tau) \to (0,0),$ in the latter
we get

\begin{equation}\label{sdjfhsfskl}
\begin{split}
&\int_{\Omega_{\lambda_0+\nu}\setminus K}|\nabla
w_{\lambda_0+\nu}^+|^2\,dx \leq {2 c(f, \|u\|_{L^\infty(\Omega_{\lambda_0 +{\frac{\vert \lambda_0 \vert}{4}} })}) }
\int_{\Omega_{\lambda_0+\nu}\setminus K}  (w_{\lambda_0+\nu}^+)^2\,dx\\
&\leq {2 c(f, \|u\|_{L^\infty(\Omega_{\lambda_0 +{\frac{\vert \lambda_0 \vert}{4}} })}) }
c_p^2(\Omega_{\lambda_0+\nu}\setminus K)
\int_{\Omega_{\lambda_0+\nu}\setminus K}   |\nabla
w_{\lambda_0+\nu}^+|^2\,dx\, ,
  \end{split}
\end{equation}
where $c_p(\cdot)$ is the Poincar\'e constant (in the Poincar\'e inequality in $H^1_0(\Omega_{\lambda_0+\nu}\setminus K)$).
{Now we recall that $ c_p^2(\Omega_{\lambda_0+\nu}\setminus K) \le Q(n) \vert \Omega_{\lambda_0+\nu}\setminus K \vert ^{\frac{2}{N}} $, where $Q=Q(n)$ is a positive constant depending only on the dimension $n$, and therefore, by summarizing, we have proved that for every compact set $K\subset \Omega_{\lambda_0}\setminus R_{\lambda_0}(\Gamma)$ there is a small $\bar\nu = \bar\nu(K, \lambda_0) \in (0, \frac{\vert \lambda_0 \vert}{4})$ such that for every $ 0 \le \nu < \bar\nu$ we have
\begin{equation}\label{sdjfhsfskl2}
\begin{split}
&\int_{\Omega_{\lambda_0+\nu}\setminus K}|\nabla
w_{\lambda_0+\nu}^+|^2\,dx \leq {2 c(f, \|u\|_{L^\infty(\Omega_{\lambda_0 +{\frac{\vert \lambda_0 \vert}{4}} })}) }Q(n) \vert \Omega_{\lambda_0+\nu}\setminus K \vert ^{\frac{2}{N}}
\int_{\Omega_{\lambda_0+\nu}\setminus K}   |\nabla
w_{\lambda_0+\nu}^+|^2\,dx.
\end{split}
\end{equation}
Now we first fix a compact $K\subset \Omega_{\lambda_0}\setminus R_{\lambda_0}(\Gamma)$ such that
$$\vert \Omega_{\lambda_0}\setminus K \vert^{\frac{2}{N}} < [ 20 c(f, \|u\|_{L^\infty(\Omega_{\lambda_0 +{\frac{\vert \lambda_0 \vert}{4}} })}) Q(n)]^{-1},$$ this is possible since $ \vert  R_{\lambda_0}(\Gamma)\vert  =0$ by the assumption on $ \Gamma$, and then we take $ \bar\nu_0 < \bar\nu $ such that for every $ 0 \le \nu < \bar\nu_0$ we have
$\vert \Omega_{\lambda_0 + \nu} \setminus \Omega_{\lambda_0}
\vert^{\frac{2}{N}}< [ 20 c(f, \|u\|_{L^\infty(\Omega_{\lambda_0
+{\frac{\vert \lambda_0 \vert}{4}} })}) Q(n)]^{-1}$. Inserting those
informations into \eqref{sdjfhsfskl2} we immediately get that
\begin{equation}\label{sdjfhsfskl3}
\begin{split}
&\int_{\Omega_{\lambda_0+\nu}\setminus K}|\nabla
w_{\lambda_0+\nu}^+|^2\,dx  < \frac{1}{2}
\int_{\Omega_{\lambda_0+\nu}\setminus K}   |\nabla
w_{\lambda_0+\nu}^+|^2\,dx
\end{split}
\end{equation}
and so $\nabla w_{\lambda_0+\nu}^+$ on $ \Omega_{\lambda_0+\nu} \setminus K$ for every $ 0 \le \nu < \bar\nu_0$. On the other hand, we recall that $\nabla w_{\lambda_0+\nu}^+$ on an open neighbourhood of $K$ for every $ 0 \le \nu < \bar\nu$, thus $\nabla w_{\lambda_0+\nu}^+$ on $ \Omega_{\lambda_0+\nu} $ for every $ 0 \le \nu < \bar\nu_0$. The latter proves that $u\leq u_{\lambda_0+\nu}$ in
$\Omega_{\lambda_0+\nu}\setminus R_{\lambda_0+\nu}(\Gamma)$ for every
$0<\nu<\bar\nu_0$. Such a contradiction shows that
\[
\lambda_0=0\,.
\]
}

\emph{Step 3: conclusion.} Since the moving plane procedure can be
performed in the same way but in the opposite direction, then this
proves the desired symmetry result.  The fact that the solution is
increasing in the $x_1$-direction in $\{x_1<0\}$ is implicit in the
moving plane procedure. Since $u$ has $C^1$ regularity, see Remark
\ref{standellest}, the fact that $u_{x_1}$ is positive for $x_1<0$
follows by the maximum principle, the H\"opf
lemma and the assumption $(h_f)$.

\end{proof}

\section{Proof of Theorem \ref{main2} and Corollary \ref{CorSim}}\label{mainproof}

\begin{proof}[Proof of Theorem \ref{main2}]

We first note that, thanks to a well-known result of Brezis and Kato
\cite{BK} and standard elliptic estimates (see also \cite{Stru}),
the solution $u$ is smooth in $\R^n \setminus \Gamma$.
Furthermore we observe that it is enough to
prove the theorem for the special case in which
{\textit {the origin does not belong to}} $\Gamma$. Indeed, if the
result is true in this special case, then we can apply it to the
function $u_z(x) := u(x+z),$ where $ z \in \{x_1=0\} \setminus
\Gamma \neq \emptyset$, which satisfies the equation
\eqref{problemubdd} with $\Gamma$ replaced by $-z + \Gamma$ (note
that $-z + \Gamma$ is a closed and proper subset of $\{x_1=0\}$ with
$\underset{\R^n}{\operatorname{Cap}_2}(-z +\Gamma)=0$ and such that
the origin does not belong to it).

\noindent Under this assumption, we consider the map $K: \R^n \setminus \{0\} \longrightarrow \R^n  \setminus \{0\} $ defined by $K = K(x):=\frac{x}{|x|^2}$. Given  $u$ solution to \eqref{problemubdd},
its Kelvin transform is given by
\begin{equation}\label{kelv} v(x):=\frac{1}{|x|^{n-2}} u\left(\frac{x}{|x|^2}\right), \quad
x \in \R^n \setminus \{\Gamma^* \cup \{0\}\},
\end{equation}
where $\Gamma^* = K( \Gamma)$. It follows that $v$ weakly
satisfies \eqref{problemubdd} in $\R^n
\setminus \{\Gamma^* \cup \{0\}\}$ and that $\Gamma^*\subset
\{x_1=0\}$ since, by assumption, $\Gamma\subset \{x_1=0\}$.
Furthermore, we also have that $\Gamma^*$ is bounded (not
necessarily closed) since we assumed that $0\notin\Gamma$.

The proceed further we need the following lemmata

\begin{lem}\label{diffeo} Let $F: \R^n \setminus \{0\} \longrightarrow \R^n \setminus \{0\}$
be a $C^1-$diffeomorphism and let $A$ be a bounded open set of $\R^n \setminus \{0\}$. If $C \subset A$ is a compact set such that
\begin{equation}\label{closedsetnullcap}
\underset{A}{\operatorname{Cap}_2}(C)=0,
\end{equation}
then
\begin{equation}\label{kelvclosedsetnullcap}
\underset{F(A)}{\operatorname{Cap}_2}(F(C))=0.
\end{equation}
\end{lem}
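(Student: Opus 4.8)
The plan is to exploit the fact that capacity is controlled under composition with bi-Lipschitz maps on bounded sets, and that a $C^1$-diffeomorphism is locally bi-Lipschitz. First I would fix an arbitrary $\varepsilon>0$. Since $\underset{A}{\operatorname{Cap}_2}(C)=0$, there exists an open neighbourhood $U$ of $C$ with $\overline{U}\subset A$ compact, and a test function $\varphi\in C^\infty_c(A)$ with $\varphi\equiv 1$ on $U$, $0\le\varphi\le1$, and $\int_A|\nabla\varphi|^2\,dx<\varepsilon$. (This is the same type of reduction already used in Section \ref{notations}.) One may moreover assume $\operatorname{supp}\varphi$ is contained in a slightly larger compact set $L\subset A$; by enlarging $L$ once and for all we localise the problem to a compact subset of $A$ on which $F$ is bi-Lipschitz.

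Next I would set $\psi := \varphi\circ F^{-1}$, defined on $F(A)$ and extended by $0$. Since $F^{-1}$ is $C^1$ on the open set $F(A)\setminus\{0\}$ and $\operatorname{supp}\varphi$ is a compact subset of $A$ not containing $0$, the restriction of $F^{-1}$ to $F(L)$ is Lipschitz with some constant $\operatorname{Lip}(F^{-1}|_{F(L)})=:\Lambda_1$, and likewise $F$ restricted to $L$ has Lipschitz constant $\Lambda_0$ and Jacobian bounded below; hence $F(U)$ is an open neighbourhood of $F(C)$, $\psi\in C^{0,1}_c(F(A))$, $\psi\equiv1$ on $F(U)$, and $0\le\psi\le1$. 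Using the chain rule for the Lipschitz function $\psi$ together with the change of variables $y=F(x)$,
\begin{equation}\nonumber
\int_{F(A)}|\nabla\psi|^2\,dy \;\le\; \Lambda_1^{2}\int_{F(L)}|(\nabla\varphi)\circ F^{-1}(y)|^2\,dy
\;=\;\Lambda_1^{2}\int_{L}|\nabla\varphi(x)|^2\,|\det DF(x)|\,dx\;\le\; C\,\varepsilon,
\end{equation}
where $C=\Lambda_1^2\sup_{L}|\det DF|<\infty$ depends only on $F$ and $L$. A minor technical point is that $\psi$ is merely Lipschitz rather than smooth, but this is harmless: the definition of $2$-capacity via $C^\infty_c$ test functions coincides with the one via $C^{0,1}_c$ test functions (standard mollification), exactly as used implicitly for $\psi_\varepsilon$ and $\phi_\tau$ in Section \ref{notations}. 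Letting $\varepsilon\to0$ yields $\underset{F(A)}{\operatorname{Cap}_2}(F(C))=0$.

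The only genuine obstacle is the behaviour near the origin: $F$ is a diffeomorphism of $\R^n\setminus\{0\}$, so its Lipschitz constants and Jacobian bounds blow up as one approaches $0$. This is precisely why the hypothesis $C\subset A$ with $A$ bounded and $0\notin A$ (so $\overline A$ a compact subset of $\R^n\setminus\{0\}$, or at least $C$ a compact subset of the open set $A\subset\R^n\setminus\{0\}$) is essential: it confines all the analysis to a fixed compact set $L\Subset\R^n\setminus\{0\}$ on which $F$, $F^{-1}$, $DF$ are uniformly controlled. Once this localisation is in place the estimate above is routine. In the application $F=K$ is the Kelvin map and $A$ is taken to be a bounded neighbourhood of $\Gamma^*$ bounded away from $0$, which is available precisely because $0\notin\Gamma$ was arranged at the start of the proof of Theorem \ref{main2}.
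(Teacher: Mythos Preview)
Your proof is correct and follows essentially the same approach as the paper: both take a test function $\varphi_\varepsilon\in C^\infty_c(A)$ realising small capacity for $C$, set $\psi_\varepsilon:=\varphi_\varepsilon\circ F^{-1}$, and bound $\int_{F(A)}|\nabla\psi_\varepsilon|^2$ via the chain rule and the change of variables $y=F(x)$, using that $JF$, $JF^{-1}$ and $\det DF$ are uniformly bounded on the relevant compact set. Your discussion of why the compactness away from the origin matters is a useful elaboration; note however that since $\varphi_\varepsilon$ is smooth and $F^{-1}$ is $C^1$, the composed function $\psi_\varepsilon$ is actually $C^1_c(F(A))$, so the Lipschitz-versus-smooth issue you raise does not in fact arise here.
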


\begin{proof}
By hypothesis \eqref{closedsetnullcap} and by definition of
2-capacity, for every $\varepsilon
> 0$ let $\varphi_\varepsilon \in C^\infty_c(A)$ such that
\begin{itemize}
\item[a.] $\displaystyle \int_{A} | \nabla
\varphi_{\varepsilon}|^2 \,dx < \varepsilon$
\item[b.] $\displaystyle \varphi_{\varepsilon} \geq 1$ in a neighborhood $\mathcal{B}_{\varepsilon}$ of $C$.
\end{itemize}
Let $\psi_\varepsilon := \varphi_\varepsilon \circ G$, where
$G:=F^{-1}$. By definition of $\psi_\varepsilon$, we immediately
have that $\psi_\varepsilon \geq 1$ in a neighborhood
$\mathcal{B}^{'}_{\varepsilon}$ of the compact set $F(C)$. Moreover
\begin{equation}\nonumber
\begin{split}
\int_{F(A)} |\nabla \psi_\varepsilon(y)|^2 \, dy &= \int_{F(A)}
| JG (y_1,...,y_n) \cdot \nabla \varphi_\varepsilon (G_1(y_1),...,G_N(y_n))|^2 \, dy_1 \cdots dy_n \\
& \leq \int_{F(A)} \|JG \|_{\infty, {\overline{F(A)}}} |\nabla \varphi_\varepsilon (G_1(y_1),...,G_N(y_n))|^2\, dy_1 \cdots dy_n \\
& \leq C(F,A) \int_{F(A)} |\nabla \varphi_\varepsilon (G_1(y_1),...,G_N(y_n))|^2\, dy_1 \cdots dy_n \\
& =  C(F,A) \int_A |\nabla \varphi_\varepsilon (x_1,...,x_n)|^2 |\det(JF (x_1,...,x_n))| dx_1 \cdots dx_n\\
&\leq {\tilde C}(F,A) \int_A |\nabla \varphi_\varepsilon|^2 \, dx <{\tilde C}(F,A) \varepsilon.\\
\end{split}
\end{equation}
Since ${\tilde C}(F,A) $ is independent of $\varepsilon$, the
desired conclusion follows at once.

\end{proof}

\begin{lem}\label{geom1} Let $\Gamma $ be a closed subset of $\R^n$, with $ n \ge 3$. Also suppose that $0 \not\in \Gamma$ and
\begin{equation}\label{gamm}
\underset{\R^n}{\operatorname{Cap}_2}(\Gamma)=0.
\end{equation}
Then
\begin{equation}\label{gamstar} \underset{\R^n}{\operatorname{Cap}_2}(\Gamma^*)=0.
\end{equation}
\end{lem}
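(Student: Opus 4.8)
The plan is to reduce the statement to Lemma \ref{diffeo} by exhibiting $\Gamma^*$ as the image of a set of zero $2$-capacity under a suitable $C^1$-diffeomorphism. First I would fix a bounded open set $A \subset \R^n \setminus \{0\}$ containing $\Gamma$: this is possible because $\Gamma$ is closed with $\operatorname{Cap}_2(\Gamma) = 0$, hence has empty interior and, being of zero capacity, is contained in a ball far from the origin — more carefully, since $\operatorname{Cap}_2(\Gamma)=0$ forces $\Gamma$ to be bounded (a closed set of zero $2$-capacity in $\R^n$ is compact for $n \ge 3$, as its complement is connected and dense near infinity), and $0 \notin \Gamma$, we may choose $R > r > 0$ with $\Gamma \subset A := \{\, r < |x| < R \,\}$. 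On this annulus the Kelvin map $K(x) = x/|x|^2$ restricts to a $C^1$-diffeomorphism of $A$ onto the annulus $A^* := \{\, 1/R < |x| < 1/r \,\}$, which is again a bounded open subset of $\R^n \setminus \{0\}$.

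Next I would record that $\operatorname{Cap}_2(\Gamma) = 0$ in $\R^n$ implies $\underset{A}{\operatorname{Cap}_2}(\Gamma) = 0$ in the sense of the localized capacity used in Section \ref{notations}: given test functions $\varphi_\varepsilon \in C_c^\infty(\R^n)$ with small Dirichlet energy and $\varphi_\varepsilon \ge 1$ near $\Gamma$, one multiplies by a fixed cutoff supported in $A$ and equal to $1$ on a neighborhood of $\Gamma$ (such a cutoff exists since $\operatorname{dist}(\Gamma, \partial A) > 0$), which does not increase the energy beyond a harmless factor and a term controlled by the $L^2$-norm of $\varphi_\varepsilon$ on the cutoff region; a standard truncation as in \eqref{test1} then keeps everything bounded, and one concludes $\underset{A}{\operatorname{Cap}_2}(\Gamma) = 0$. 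With this in hand, Lemma \ref{diffeo} applied to $F := K|_A$ and $C := \Gamma$ gives $\underset{A^*}{\operatorname{Cap}_2}(\Gamma^*) = 0$, since $\Gamma^* = K(\Gamma)$ is compact (image of the compact $\Gamma$ under the continuous $K$) and contained in $A^*$.

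Finally I would pass from the localized statement $\underset{A^*}{\operatorname{Cap}_2}(\Gamma^*) = 0$ back to the global one $\underset{\R^n}{\operatorname{Cap}_2}(\Gamma^*) = 0$: this is immediate because $A^* \subset \R^n$, so any admissible test function for the localized capacity on $A^*$, extended by zero, is admissible for the global capacity, whence $\underset{\R^n}{\operatorname{Cap}_2}(\Gamma^*) \le \underset{A^*}{\operatorname{Cap}_2}(\Gamma^*) = 0$. The only genuinely delicate point is the first one — verifying that $\Gamma$ is bounded so that the Kelvin map is a diffeomorphism on a neighborhood of it — but under the hypothesis $\operatorname{Cap}_2(\Gamma) = 0$ with $\Gamma$ closed and $n \ge 3$ this is standard; alternatively, if one does not wish to invoke boundedness of $\Gamma$, one can carry out the same argument on each annulus $A_j := \{\, 1/(j+1) < |x| < j+1 \,\}$ with $j \in \N$, obtain $\underset{K(A_j)}{\operatorname{Cap}_2}(\Gamma^* \cap K(A_j)) = 0$ for every $j$, and use countable subadditivity of the $2$-capacity together with the fact that $0 \notin \Gamma^*$ (so $\Gamma^* \subset \bigcup_j K(A_j)$) to conclude $\underset{\R^n}{\operatorname{Cap}_2}(\Gamma^*) = 0$.
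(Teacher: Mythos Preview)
Your primary argument contains a genuine error: the claim that a closed set $\Gamma \subset \R^n$ with $\operatorname{Cap}_2(\Gamma) = 0$ must be bounded is false. A line in $\R^3$, or more generally any affine $k$-dimensional subspace with $k \le n-2$, is closed, unbounded, and has zero $2$-capacity; the paper's own example following Corollary~\ref{CorSim} uses precisely such unbounded $\Gamma$, and Theorem~\ref{main2} (whose proof invokes this lemma) is stated for possibly unbounded $\Gamma$. The justification you sketch (``its complement is connected and dense near infinity'') has no bearing on boundedness, so the first route cannot be salvaged as written.

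Your alternative argument, however, is essentially the paper's proof. The paper likewise exhausts $\Gamma$ by compact pieces contained in bounded open annuli avoiding the origin, applies Lemma~\ref{diffeo} on each annulus with $F=K$, and then invokes countable subadditivity of the $2$-capacity to conclude. One technical point you should tighten: with your choice $A_j = \{\,1/(j+1) < |x| < j+1\,\}$ the set $\Gamma \cap A_j$ need not be compact (it may reach $\partial A_j$), so Lemma~\ref{diffeo} does not apply directly to it. The paper avoids this by taking the compact pieces $\Gamma \cap (\overline{B_m(0)} \setminus B_{r_0}(0))$ inside the strictly larger open annuli $A_m = B_{m+1}(0) \setminus \overline{B_{r_0/2}(0)}$, where $r_0>0$ is fixed with $B_{r_0}(0)\cap\Gamma=\emptyset$; this guarantees the compact-inside-open inclusion that Lemma~\ref{diffeo} requires. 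With that adjustment your alternative route coincides with the paper's.
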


\begin{proof}
Since $0$ belongs to the open set $\R^n \setminus \Gamma$, there exists $r_0 \in (0,1) $ such that $ B_{r_0}(0) \cap \Gamma = \emptyset$. Therefore, $\displaystyle \Gamma= \bigcup_{m=1}^{+\infty} \left[\Gamma \cap
(\overline{B_m(0)} \setminus B_{r_0}(0))\right]$ and so
$$\underset{\R^n}{\operatorname{Cap}_2}\left[\Gamma \cap
(\overline{B_m(0)} \setminus B_{r_0}(0))\right]=0, \, \, \, \forall m \in \N,$$
since \eqref{gamm} is in force. The latter and $ n \ge 3$ imply that
$$\underset{A_m} {\operatorname{Cap}_2}\left[\Gamma \cap
(\overline{B_m(0)} \setminus B_{r_0}(0))\right]=0, \, \, \, \forall m \in \N,$$
where $ A_m := {B_{m+1}(0)} \setminus {\overline {B_{\frac{r_0}{2}}(0))}}$ is an open and bounded set for every $ m \ge 1$.
An application of lemma \ref{diffeo} with $F =K$, the inversion $x \to \frac{x}{\vert x \vert^2}$, $ A =A_m$ and $C = \Gamma \cap (\overline{B_m(0)} \setminus B_{r_0}(0))$ yields
$$\underset{K(A_m)}{\operatorname{Cap}_2} K \left(\Gamma \cap
(\overline{B_m(0)} \setminus B_{r_0}(0))\right)=0, \, \, \, \forall m \in \N$$
and so
$$\underset{\R^n}{\operatorname{Cap}_2} K \left(\Gamma \cap
(\overline{B_m(0)} \setminus B_{r_0}(0))\right)=0, \, \, \, \forall m \in \N.$$
But $\displaystyle \Gamma^*= K(\Gamma) = K(\bigcup_{m=1}^{+\infty} \left[\Gamma \cap
(\overline{B_m(0)} \setminus B_{r_0}(0))\right]) = \bigcup_{m=1}^{+\infty} K \left(\Gamma \cap
(\overline{B_m(0)} \setminus B_{r_0}(0))\right) $ and the 2-capacity is an exterior measure (see e.g. \cite{evans}), so
the desired conclusion \eqref{gamstar} follows.

\end{proof}

\noindent  Let us now fix some notations. We set
\begin{equation}\label{eq:sn2hgjhsdgf}
\Sigma_\lambda=\{x\in\mathbb{R}^n\,:\,x_1 <\lambda\}\,.
\end{equation}
As above $x_\lambda=(2\lambda-x_1,x_2,\ldots,x_n)$ is the reflection
of $x$ through the hyperplane $T_\lambda=\{x=(x_1,...,x_n)\in \R^n \
| \ x_1=\lambda\}$. Finally we consider the Kelvin transform $v$ of
$u$ defined in \eqref{kelv} and we set
\begin{equation}\label{mov}
w_{\lambda}(x)=v(x)-v_\lambda (x)= v(x) - v(x_\lambda).
\end{equation}
Note that $v$ weakly solves
\begin{equation}\label{debil122bissj}
\int_{\mathbb{R}^n} \nabla v\nabla
\varphi\,dx\,=\,\int_{\mathbb{R}^n} v^{2^*-1}
\varphi\,dx\qquad\forall \varphi\in C^{1}_c(\mathbb{R}^n\setminus
\Gamma^* \cup \{0\})\,.
\end{equation}
 and $v_\lambda$ weakly solves
\begin{equation}\label{debil122biss}
\int_{\mathbb{R}^n} \nabla v_\lambda\nabla
\varphi\,dx\,=\,\int_{\mathbb{R}^n} v_\lambda^{2^*-1}
\varphi\,dx\qquad\forall \varphi\in C^{1}_c(\mathbb{R}^n\setminus
R_\lambda(\Gamma^* \cup \{0\}))\,.
\end{equation}

The properties of the Kelvin transform, the fact that
$0\notin\Gamma$ and the regularity of $u$ imply that $\vert v(x)
\vert \le C \vert x \vert^{2-N} $ for every $x \in \R^n$ such that
$\vert x \vert \ge R$, where $C$ and $R$ are positive constants
(depending on $u$).  In particular, for every $ \lambda <0$, we have
\begin{equation}\label{kelvinp}
v\in L^{2^*}(\Sigma_\lambda)\cap L^{\infty}(\Sigma_\lambda) \cap C^0 (\overline{\Sigma_\lambda}) \,.
\end{equation}

\begin{lem}\label{stimgrad} Under the assumption of Theorem
\ref{main2}, for every $\lambda < 0$, we have that $ w_\lambda^+ \in
L^{2^*}(\Sigma_\lambda), \nabla w_\lambda^+ \in L^2(\Sigma_\lambda)
$ and
\begin{equation}\label{buonastima}
\Vert w_\lambda^+ \Vert^2_{L^{2^*}(\Sigma_\lambda)}
\le C_S^2 \int_{\Sigma_\lambda}|\nabla w_\lambda^+|^2\,dx\leq
2 C_S^2 \frac{n+2}{n-2} \|v\|^{2^*}_{L^{2^*}(\Sigma_\lambda)},
\end{equation}
where $C_S$ denotes de best constant in Sobolev embedding.
\end{lem}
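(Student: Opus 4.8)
The plan is to reproduce, in the whole space and for the explicit power nonlinearity $t\mapsto t^{2^*-1}$, the energy estimate carried out in Lemma~\ref{leaiuto2}; the only genuinely new ingredient is the decay of the Kelvin transform $v$, which is needed to cope with the unboundedness of $\Sigma_\lambda$.

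\emph{Preliminaries.} Since $u>0$ is smooth in $\R^n\setminus\Gamma$ (Brezis--Kato), $v$ is positive and smooth in $\R^n\setminus(\Gamma^*\cup\{0\})$, and, as $\lambda<0$ and $\Gamma^*\cup\{0\}\subset\{x_1=0\}$, $v$ is in fact smooth on a neighbourhood of $\overline{\Sigma_\lambda}$; on the other hand $v_\lambda$ is smooth away from $R_\lambda(\Gamma^*\cup\{0\})\subset\{x_1=2\lambda\}\subset\Sigma_\lambda$. Moreover $\overline{\Gamma^*}\subset\Gamma^*\cup\{0\}$, so $\Gamma^*\cup\{0\}$ is compact, and by Lemma~\ref{geom1}, the subadditivity of $\operatorname{Cap}_2$ and $\operatorname{Cap}_2(\{0\})=0$ (recall $n\ge3$) one gets $\operatorname{Cap}_2(\Gamma^*\cup\{0\})=0$; hence $R_\lambda(\Gamma^*\cup\{0\})$ is a compact subset of $\Sigma_\lambda$ of zero $2$-capacity. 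Finally, $v,v_\lambda>0$ forces $0\le w_\lambda^+\le v$ on $\Sigma_\lambda$, so by \eqref{kelvinp} $w_\lambda^+\in L^{2^*}(\Sigma_\lambda)\cap L^\infty(\Sigma_\lambda)$, and $w_\lambda\equiv0$ on $T_\lambda=\partial\Sigma_\lambda$.

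\emph{Test function and energy estimate.} As in Section~\ref{notations} we choose a Lipschitz cut-off $\psi_\varepsilon$ with $0\le\psi_\varepsilon\le1$, $\psi_\varepsilon=0$ near $R_\lambda(\Gamma^*\cup\{0\})$, $\psi_\varepsilon=1$ off a small neighbourhood of it and $\int_{\R^n}|\nabla\psi_\varepsilon|^2\,dx<4\varepsilon$, together with a standard cut-off $\xi_R(x)=\xi(x/R)$, $\xi=1$ on $B_1$, $\xi=0$ off $B_2$, $|\nabla\xi_R|\le C/R$. Put $\varphi:=w_\lambda^+\psi_\varepsilon^2\xi_R^2$, extended by $0$ outside $\Sigma_\lambda$. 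Arguing as in Lemma~\ref{leaiuto}, $\varphi\in C^{0,1}_c(\R^n)$, $\varphi$ vanishes on $T_\lambda$, and its support avoids both $\Gamma^*\cup\{0\}$ and $R_\lambda(\Gamma^*\cup\{0\})$; hence $\varphi\in\mathcal D^{1,2}_0(\Sigma_\lambda)$ and, after mollification, $\varphi$ is an admissible test function in \eqref{debil122bissj} and \eqref{debil122biss} (here $\nabla w_\lambda^+:=\nabla w_\lambda\,\chi_{\{w_\lambda>0\}}$ is well defined a.e. on $\Sigma_\lambda$, since $v,v_\lambda\in C^1$ off their singular sets). Subtracting the two weak formulations, using $w_\lambda^+\nabla w_\lambda=w_\lambda^+\nabla w_\lambda^+$ and the convexity inequality $0\le v^{2^*-1}-v_\lambda^{2^*-1}\le(2^*-1)v^{2^*-2}w_\lambda^+$ on $\{w_\lambda^+>0\}$, we obtain
\begin{multline*}
\int_{\Sigma_\lambda}\psi_\varepsilon^2\xi_R^2|\nabla w_\lambda^+|^2\,dx
\le -2\int_{\Sigma_\lambda}w_\lambda^+\psi_\varepsilon\xi_R^2\,\nabla w_\lambda^+\!\cdot\!\nabla\psi_\varepsilon\,dx\\
-2\int_{\Sigma_\lambda}w_\lambda^+\psi_\varepsilon^2\xi_R\,\nabla w_\lambda^+\!\cdot\!\nabla\xi_R\,dx
+(2^*-1)\int_{\Sigma_\lambda}v^{2^*-2}(w_\lambda^+)^2\psi_\varepsilon^2\xi_R^2\,dx.
\end{multline*}
Young's inequality (absorbing $\tfrac14+\tfrac14$ of the left-hand side), together with $(w_\lambda^+)^2\le\|v\|_{L^\infty(\Sigma_\lambda)}^2$, $\int|\nabla\psi_\varepsilon|^2<4\varepsilon$, the bound $(w_\lambda^+)^2\le v^2\le C|x|^{2(2-n)}$ at infinity (so that $\int(w_\lambda^+)^2|\nabla\xi_R|^2\le C'R^{2-n}$) and $v^{2^*-2}(w_\lambda^+)^2\le v^{2^*}$, yields
\[
\tfrac12\int_{\Sigma_\lambda}\psi_\varepsilon^2\xi_R^2|\nabla w_\lambda^+|^2\,dx\le 16\,\varepsilon\,\|v\|_{L^\infty(\Sigma_\lambda)}^2+C''R^{2-n}+(2^*-1)\|v\|_{L^{2^*}(\Sigma_\lambda)}^{2^*}.
\]
Letting $\varepsilon\to0$, $R\to\infty$ and applying Fatou's lemma (note $\psi_\varepsilon\xi_R\to1$ a.e. on $\Sigma_\lambda$) gives $\nabla w_\lambda^+\in L^2(\Sigma_\lambda)$ and the second inequality in \eqref{buonastima}, since $2(2^*-1)=2\frac{n+2}{n-2}$.

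\emph{Conclusion.} Taking $\varepsilon\to0$, $R\to\infty$ once more, $\varphi\to w_\lambda^+$ in $L^{2^*}(\Sigma_\lambda)$ (dominated by $v$), while, by the formula for $\nabla\varphi$ and the estimates just obtained, $\nabla\varphi\to\nabla w_\lambda^+$ in $L^2(\Sigma_\lambda)$; as each $\varphi\in C^{0,1}_c(\Sigma_\lambda)\subset\mathcal D^{1,2}_0(\Sigma_\lambda)$, it follows that $w_\lambda^+\in\mathcal D^{1,2}_0(\Sigma_\lambda)$, so $\nabla w_\lambda^+$ is indeed its distributional gradient and the Sobolev inequality $\|w_\lambda^+\|_{L^{2^*}(\Sigma_\lambda)}\le C_S\|\nabla w_\lambda^+\|_{L^2(\Sigma_\lambda)}$ gives the first inequality in \eqref{buonastima}. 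I expect the delicate points to be the admissibility of $\varphi$ — whose support must simultaneously avoid the singular sets of $v$ and of $v_\lambda$, lie in $\overline{\Sigma_\lambda}$ and vanish on $T_\lambda$ — and the vanishing of the far-field term coming from $\xi_R$, which is precisely where the decay $|v(x)|\le C|x|^{2-n}$ of the Kelvin transform (and the hypothesis $n\ge3$) is essential.
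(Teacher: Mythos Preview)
Your proof is correct and follows essentially the same route as the paper's: the same cut-off test function $w_\lambda^+\psi_\varepsilon^2\xi_R^2$, the same subtraction of the weak formulations, Young's inequality to absorb the cross terms, the convexity estimate for $I_3$, Fatou to pass to the limit, and the identification of the distributional gradient via $\varphi\to w_\lambda^+$ in $L^{2^*}$ and $\nabla\varphi\to\nabla w_\lambda^+$ in $L^2$ to recover the Sobolev inequality. The only (harmless) difference is in the far-field term coming from $\xi_R$: you kill it with the pointwise decay $v(x)\le C|x|^{2-n}$, giving $\int(w_\lambda^+)^2|\nabla\xi_R|^2\le C'R^{2-n}\to0$, whereas the paper uses H\"older with exponents $n/2$ and $n/(n-2)$ to bound it by $c(n)\big(\int_{\Sigma_\lambda\cap(B_{2R}\setminus B_R)}v^{2^*}\big)^{(n-2)/n}$, which vanishes because $v\in L^{2^*}(\Sigma_\lambda)$; both arguments rest on the same decay information encoded in \eqref{kelvinp}.
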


\begin{proof}
We immediately see that $w_\lambda^+ \in L^{2^*}(\Sigma_\lambda),$
since $0\leq w_\lambda^+\leq v \in L^{2^*}(\Sigma_\lambda)$ . The
rest of the proof follows the lines of the one of lemma
\ref{leaiuto2}. Arguing as in section 2, for every $ \varepsilon>0$,
we can find a function $\psi_\varepsilon \in C^{0,1}(\R^N, [0,1])$
such that
$$\int_{\Sigma_\lambda} |\nabla \psi_\varepsilon|^2 < 4 \varepsilon$$
and $\psi_\varepsilon = 0$ in an open neighborhood
$\mathcal{B_{\varepsilon}}$ of $R_{\lambda}(\{\Gamma^* \cup
\{0\}\})$, with $\mathcal{B_{\varepsilon}} \subset
\Sigma_{\lambda}$.

Fix $ R_0>0$ such that $R_{\lambda}(\{\Gamma^* \cup \{0\} )\subset
B_{R_0} $ and, for every $ R > R_0$, let $\varphi_R$ be a standard
cut off function such that $ 0 \le \varphi_R \le 1 $ on $ \R^n$,
$\varphi_R=1$ in $B_R$, $\varphi_R=0$ outside $B_{2R}$ with
$|\nabla\varphi_R|\leq 2/R,$ and consider
$$\varphi\,:= \begin{cases}
\, w_\lambda^+\psi_\varepsilon^2\varphi_R^2 \, & \text{in}\quad\Sigma_\lambda, \\
0 &  \text{in}\quad \R^n \setminus  \Sigma_\lambda.
\end{cases}$$
Now, as in Lemma \ref{leaiuto} we see that $ \varphi \in C^{0,1}_c(\R^n)$ with $supp(\varphi)$ contained in $\overline {\Sigma_{\lambda} \cap B_{2R}} \setminus R_{\lambda}(\{\Gamma^* \cup \{0\}\})$ and
\begin{equation}\label{gradvarphi-intero}
\nabla \varphi = \psi_\varepsilon^2 \varphi_R^2 (\nabla w_\lambda \chi_{supp(w_\lambda^+) \cap supp(\varphi)}) +
2 (w_\lambda^+ \chi_{supp(\varphi)}) (\psi_\varepsilon^2 \varphi_R \nabla \varphi_R + \psi_\varepsilon \varphi_R^2  \nabla  \psi_\varepsilon).
\end{equation}
Therefore, by a standard density argument, we can use $\varphi$ as test function in \eqref{debil122bissj} and in \eqref{debil122biss} so that, subtracting we get
\begin{equation}\label{djfsdjfbskfhasklfh}
\begin{split}
\int_{\Sigma_\lambda}|\nabla w_\lambda \chi_{supp(w_\lambda^+)}|^2 \psi_\varepsilon^2\varphi_R^2\,dx&=
-2\int_{\Sigma_\lambda}\nabla w_\lambda \nabla \psi_\varepsilon
w_\lambda^+ \psi_\varepsilon\varphi_R^2\,dx
-2\int_{\Sigma_\lambda} \nabla w_\lambda \nabla \varphi_R w_\lambda^+ \varphi_R \psi_\varepsilon^2\,dx\\
&+\int_{\Sigma_\lambda} (v^{2^*-1}-v_\lambda^{2^*-1}) w_\lambda^+ \psi_\varepsilon^2\varphi_R^2\,dx\,\,\\
&=:\,I_1+I_2+I_3\,.
\end{split}
\end{equation}
Exploiting also Young's inequality and recalling that
$0\leq w_\lambda^+\leq v$, we get that
\begin{equation}\label{I1}
\begin{split}
|I_1|&\leq \frac{1}{4} \int_{\Sigma_\lambda}|\nabla w_\lambda
\chi_{supp(w_\lambda^+)}|^2\psi_\varepsilon^2\varphi_R^2\,dx
+4\int_{\Sigma_\lambda}| \nabla \psi_\varepsilon|^2(w_\lambda^+)^2\varphi_R^2\,dx\\
&\leq  \frac{1}{4} \int_{\Sigma_\lambda}|\nabla w_\lambda
\chi_{supp(w_\lambda^+)}|^2\psi_\varepsilon^2\varphi_R^2\,dx
+ 16 \varepsilon \|v\|^2_{L^\infty(\Sigma_\lambda)} .\\
\end{split}
\end{equation}

Furthermore we have that
\begin{equation}\label{I2}
\begin{split}
|I_2|&\leq \frac{1}{4} \int_{\Sigma_\lambda}|\nabla w_\lambda
\chi_{supp(w_\lambda^+)}|^2\psi_\varepsilon^2\varphi_R^2\,dx
+4\int_{\Sigma_\lambda\cap(B_{2R}\setminus B_{R})}|\nabla \varphi_R|^2(w_\lambda^+)^2 \psi_\varepsilon^2\,dx\\
&\leq  \frac{1}{4} \int_{\Sigma_\lambda}|\nabla w_\lambda
\chi_{supp(w_\lambda^+)}|^2\psi_\varepsilon^2\varphi_R^2\,dx\\
&+ 4\left(\int_{\Sigma_\lambda\cap(B_{2R}\setminus B_{R})}|\nabla
\varphi_R|^n\,dx\right)^{\frac{2}{n}}
\left(\int_{\Sigma_\lambda\cap(B_{2R}\setminus B_{R})}v^{2^*}\,dx\right)^{\frac{n-2}{n}}\\
&\leq  \frac{1}{4} \int_{\Sigma_\lambda}|\nabla w_\lambda
\chi_{supp(w_\lambda^+)}|^2\psi_\varepsilon^2\varphi_R^2\,dx\,+\,
c(n) \left(\int_{\Sigma_\lambda\cap(B_{2R}\setminus
B_{R})}v^{2^*}\,dx\right)^{\frac{n-2}{n}}
\end{split}
\end{equation}
where $c(n)$ is a positive constant depending only on the dimension $n$.

Let us now estimate $I_3$. Since $v(x), v_\lambda(x)>0$, by the
convexity of $ t \to t^{2^*-1},$ for $ t >0$, we obtain
$v^{2^*-1}(x)-v_\lambda^{2^*-1}(x) \le  \frac{n+2}{n-2}
v_\lambda^{2^*-2}(x) (v(x) - v_\lambda(x))$, for every $x \in
\Sigma_{\lambda}$. Thus, by making use of the monotonicity of $ t
\to t^{2^*-2}$, for $ t >0$ and the definition of $w_\lambda^+$ we
get  $(v^{2^*-1}-v_\lambda^{2^*-1})w_\lambda^+ \le  \frac{n+2}{n-2}
v_\lambda^{2^*-2}(v-v_\lambda) w_\lambda^+ \le \frac{n+2}{n-2}
v^{2^*-2}(w_\lambda^+)^2.$ Therefore
\begin{equation}\label{I3}
\begin{split}
I_3& \leq
\frac{n+2}{n-2} \int_{\Sigma_\lambda}
v^{2^*-2}(w_\lambda^+)^2 \psi_\varepsilon^2\varphi_R^2\,dx\,\\
&\leq \frac{n+2}{n-2} \int_{\Sigma_\lambda} v^{2^*-2}v^2 dx\,=
\frac{n+2}{n-2} \int_{\Sigma_\lambda} v^{2^*}\,dx = \frac{n+2}{n-2} \|v\|^{2^*}_{L^{2^*}(\Sigma_\lambda)}\\
\end{split}
\end{equation}where we also used that $0\leq w_\lambda^+\leq v$.
Taking into account the estimates on $I_1$, $I_2$ and $I_3$, by \eqref{djfsdjfbskfhasklfh} we deduce that

\begin{equation}\label{primastima}
\begin{split}
\int_{\Sigma_\lambda}|\nabla w_\lambda
\chi_{supp(w_\lambda^+)}|^2\varphi_\varepsilon^2\varphi_R^2\,dx
&\leq 32 \varepsilon \|v\|^2_{L^\infty(\Sigma_\lambda)}\\
&+ 2c(n) \left(\int_{\Sigma_\lambda\cap(B_{2R}\setminus
B_{R})}v^{2^*}\,dx\right)^{\frac{n-2}{n}}\\
& + 2\frac{n+2}{n-2} \|v\|^{2^*}_{L^{2^*}(\Sigma_\lambda)}.
\end{split}
\end{equation}
By Fatou Lemma, as $\varepsilon$ tends to zero and $R$ tends to
infinity,  we deduce that $\nabla w_\lambda \chi_{supp(w_\lambda^+)}
\in L^2(\Sigma_{\lambda})$.  We also note that $ \varphi \to
w_\lambda^+$ in $L^{2^*}(\Sigma_\lambda)$, by definition of
$\varphi$, and that $\nabla \varphi \to \nabla w_\lambda
\chi_{supp(w_\lambda^+)}$ in $L^2(\Sigma_{\lambda})$, by
\eqref{gradvarphi-intero} and the fact that $w_\lambda^+\in
L^{2^*}(\Sigma_\lambda)$.  Therefore, $\nabla w_\lambda
\chi_{supp(w_\lambda^+)}$ is the distributional gradient of $\nabla
w_\lambda^+$ and so $ \nabla w_\lambda^+$ in $L^2(\Sigma_\lambda)$
with (taking limit in \eqref{primastima})
\begin{equation}\label{primastima2}
\begin{split}
\int_{\Sigma_\lambda}|\nabla w_\lambda|^2 \,dx &\leq 2
\frac{n+2}{n-2} \|v\|^{2^*}_{L^{2^*}(\Sigma_\lambda)}.
\end{split}
\end{equation}
Since $ \varphi \in C^{0,1}_c(\R^n)$ we also have
\begin{equation}\label{secondastima}
\begin{split}
\Big( \int_{\Sigma_\lambda} \varphi^{2^*} \Big)^{\frac{2}{2^*}} \le C_S^2 \int_{\Sigma_\lambda} \vert \nabla \varphi \vert^2
\end{split}
\end{equation}
where $ C_S$ denotes de best constant in Sobolev embedding.  Thus,
passing to the limit in \eqref{secondastima} and using the above
convergence results, we get the desired conclusion
\eqref{buonastima}.

\end{proof}

We can now complete the proof of Theorem \ref{main2}. As for the proof of Theorem \ref{main},  we split the proof into three steps and we start with

\noindent \emph{Step 1: there exists $M>1$ such that $v \leq v_\lambda$ in $\Sigma_\lambda\setminus
R_\lambda(\Gamma^* \cup \{0\})$, for all $\lambda< -M$.}

Arguing as in the proof of Lemma \ref{stimgrad} and using the same notations and the same construction
for $\psi_\varepsilon$, $\varphi_R$ and $\varphi$, we get

\begin{equation}\label{kdfbdjklfbfkfakjl}
\begin{split}
\int_{\Sigma_\lambda}|\nabla w_\lambda^+|^2\varphi_\varepsilon^2\varphi_R^2\,dx&=
-2\int_{\Sigma_\lambda}\nabla w_\lambda^+\nabla \varphi_\varepsilon
w_\lambda^+ \varphi_\varepsilon\varphi_R^2\,dx
-2\int_{\Sigma_\lambda} \nabla w_\lambda^+\nabla \varphi_R w_\lambda^+ \varphi_R\varphi_\varepsilon^2\,dx\\
&+\int_{\Sigma_\lambda} (v^{2^*-1}-v_\lambda^{2^*-1}) w_\lambda^+ \varphi_\varepsilon^2\varphi_R^2\,dx\,\,\\
&=:\,I_1+I_2+I_3\,,
\end{split}
\end{equation}
where $I_1,I_2$ and $I_3$ can be estimated exactly as in \eqref{I1},
\eqref{I2} and \eqref{I3}. The latter yield
\begin{equation}
\begin{split}
\int_{\Sigma_\lambda}|\nabla
w_\lambda^+|^2\varphi_\varepsilon^2\varphi_R^2\,dx &\leq 32
\varepsilon \|v\|_{L^\infty(\Sigma_\lambda)}^2\\
&+ 2c(n) \left(\int_{\Sigma_\lambda\cap(B_{2R}\setminus
B_{R})}v^{2^*}\,dx\right)^{\frac{n-2}{n}}\\
&+ 2 \frac{n+2}{n-2} \int_{\Sigma_\lambda} v^{2^*-2}(w_\lambda^+)^2
\psi_\varepsilon^2\varphi_R^2.
\end{split}
\end{equation}
Taking the limit in the latter, as $\varepsilon$ tends to zero and $R$ tends to infinity, leads to
\begin{equation}
\begin{split}
\int_{\Sigma_\lambda}|\nabla w_\lambda^+|^2 \,dx &\leq 2 \frac{n+2}{n-2} \int_{\Sigma_\lambda} v^{2^*-2}(w_\lambda^+)^2  < +\infty
\end{split}
\end{equation}
which combined with Lemma \ref{stimgrad} gives
\begin{equation}\label{kushfkusfksf}
\begin{split}
\int_{\Sigma_\lambda}|\nabla w_\lambda^+|^2\,dx&\leq 2 \frac{n+2}{n-2}
\int_{\Sigma_\lambda} v^{2^* -2}(w_\lambda^+)^2 dx\,\,\\
&\leq 2 \frac{n+2}{n-2} \left (\int_{\Sigma_\lambda} v^{2^*}
\,dx\right)^{\frac{2}{n}} \left(\int_{\Sigma_\lambda}
(w_\lambda^+)^{2^*}\,dx\right)^{\frac{2}{2^*}}\\
&\leq 2 \frac{n+2}{n-2}C_S^2 \left (\int_{\Sigma_\lambda} v^{2^*}
\,dx\right)^{\frac{2}{n}} \left(\int_{\Sigma_\lambda} |\nabla
w_\lambda^+|^2\,dx\right).
\end{split}
\end{equation}

Recalling that $v\in L^{2^*}(\Sigma_\lambda)$, we deduce the existence of $ M>1$ such that
\[
2 \frac{n+2}{n-2}C_S^2\left (\int_{\Sigma_\lambda} v^{2^*}
\,dx\right)^{\frac{2}{n}}<1\,
\]
for every $\lambda < -M$. The latter and \eqref{kushfkusfksf} lead
to
\[
\int_{\Sigma_\lambda}|\nabla w_\lambda^+|^2\,dx=0\,.
\]
This implies that $ w_\lambda^+=0$ by Lemma \ref{stimgrad} and the
claim is proved.\\

\noindent To proceed further we
 define
\begin{equation}\nonumber
\Lambda_0=\{\lambda<0 : v\leq
v_{t}\,\,\,\text{in}\,\,\,\Sigma_t\setminus R_t(\Gamma^* \cup
\{0\})\,\,\,\text{for all $t\in(-\infty,\lambda]$}\}
\end{equation}
and
\begin{equation}\nonumber
\lambda_0=\sup\,\Lambda_0.
\end{equation}

\noindent \emph{Step 2: we have that $\lambda_0=0$.} We argue by
contradiction and suppose that $\lambda_0<0$. By continuity we know
that $v\leq v_{\lambda_0}$ in $\Sigma_{\lambda_0}\setminus
R_{\lambda_0}(\Gamma^* \cup \{0\})$. By the strong maximum principle
we deduce that $v< v_{\lambda_0}$ in $\Sigma_{\lambda_0}\setminus
R_{\lambda_0}(\Gamma^* \cup \{0\})$. Indeed, $v= v_{\lambda_0}$ in
$\Sigma_{\lambda_0}\setminus R_{\lambda_0}(\Gamma^* \cup \{0\})$ )
is not possible if $\lambda_0<0$, since in this case $v$ would be
singular somewhere on $R_{\lambda_0}(\Gamma^* \cup \{0\})$. Now, for
some $\bar\tau>0$, that will be fixed later on, and for any
$0<\tau<\bar\tau$ we show that $v\leq v_{\lambda_0+\tau}$ in
$\Sigma_{\lambda_0+\tau}\setminus R_{\lambda_0+\tau}(\Gamma^* \cup
\{0\})$ obtaining a contradiction with the definition of $\lambda_0$
and proving thus the claim. To this end we are going to show that,
for every $ \delta>0$ there are $ \bar{\tau}(\delta, \lambda_0)>0 $
and a compact set $K$ (depending on $\delta$ and $\lambda_0$) such
that
\[
K \subset \Sigma_{\lambda}\setminus R_{\lambda}(\Gamma^* \cup \{0\}), \qquad
\int_{\Sigma_{\lambda}\setminus K}\,v^{2^*} < \delta, \qquad \forall \,  \lambda \in [\lambda_0, \lambda_0 + \bar{\tau}].
\]
To see this, we note that for every  every $ \delta>0$ there are $
\tau_1(\delta, \lambda_0)>0 $ and a compact set $K$ (depending on
$\delta $ and $ \lambda_0$) such that $\displaystyle
\int_{\Sigma_{\lambda_0}\setminus K}\,v^{2^*} < \frac{\delta}{2}$
and $K \subset \Sigma_{\lambda}\setminus R_{\lambda}(\Gamma^* \cup
\{0\})$ for every $ \lambda \in [\lambda_0, \lambda_0 + \tau_1].$
Consequently $u$ and $u_{\lambda}$ are well defined on $K$ for every
$ \lambda \in [\lambda_0, \lambda_0 + \tau_1].$ Hence, by the
uniform continuity of the function $ g(x,\lambda) := u(x) -
u(2\lambda-x_1,x^{'}) $ on the compact set $K \times [\lambda_0,
\lambda_0 + \tau_1]$ we can ensure that  $K \subset
\Sigma_{{\lambda_0+\tau}} \setminus R_{{\lambda_0+\tau}}(\Gamma^*
\cup \{0\})$ and $u< u_{\lambda_0+\tau}$ in $K$ for any $0 \le \tau<
\tau_2$, for some $\tau_2= \tau(\delta,\lambda_0) \in (0, \tau_1)$.
Clearly we can also assume that $\tau_2< \frac{\vert \lambda_0
\vert}{4}.$ Finally, since $v^{2^*} \in L^1(\Sigma_{\lambda_0 +
\frac{\vert \lambda_0 \vert}{4}})$ and $\displaystyle
\int_{\Sigma_{\lambda_0}\setminus K}\,v^{2^*} < \frac{\delta}{2}$,
we obtain the existence of $\bar{\tau} \in (0, \tau_2)$ such that
$\displaystyle \int_{\Sigma_{\lambda}\setminus K} \, v^{2^*} <
\delta$ for all $ \lambda \in [\lambda_0, \lambda_0 + \bar{\tau}]$.

Now we repeat verbatim the arguments used in the proof of Lemma \ref{stimgrad} but using the test function
$$\varphi \,:= \begin{cases}
\, w_{\lambda_0+\tau}^+\psi_\varepsilon^2\varphi_R^2 \, & \text{in}\quad\Sigma_{\lambda_0+\tau}  \\
 0 &  \text{in}\quad \R^n \setminus \Sigma_{\lambda_0+\tau}.
\end{cases}$$

Thus we recover the first inequality in \eqref{kushfkusfksf}, which
immediately gives, for any $0 \le \tau<\bar\tau$
\begin{equation}\label{asdgcahijgljhp}
\begin{split}
\int_{\Sigma_{\lambda_0+\tau}\setminus K}|\nabla w_{\lambda_0+\tau}^+|^2\,dx
&\leq 2 \frac{n+2}{n-2} \int_{\Sigma_{\lambda_0+\tau}\setminus K} v^{2^* -2} (w_{\lambda_0+\tau}^+)^2\,dx\\
&\leq 2 \frac{n+2}{n-2}\left (\int_{\Sigma_{\lambda_0+\tau}\setminus K}
v^{2^*} \,dx\right)^{\frac{2}{n}}
\left(\int_{\Sigma_{\lambda_0+\tau}\setminus K} (w_{\lambda_0+\tau}^+)^{2^*}\,dx\right)^{\frac{2}{2^*}}\\
&\leq 2 \frac{n+2}{n-2} C_S^2 \left (\int_{\Sigma_{\lambda_0+\tau}\setminus
K} v^{2^*} \,dx\right)^{\frac{2}{n}}
\left(\int_{\Sigma_{\lambda_0+\tau}\setminus K} |\nabla
w_{\lambda_0+\tau}^+|^2\,dx\right)
\end{split}
\end{equation}
since $ w_{\lambda_0+\tau}^+$ and $\nabla w_{\lambda_0+\tau}^+$ are zero in a neighbourhood of $K$, by the above construction.
Now we fix $\delta < \frac{1}{2} [2 \frac{n+2}{n-2} C_S^2]^{-\frac{n}{2}}$ and we observe that with this choice we have
\[
2 \frac{n+2}{n-2} C_S^2 \left (\int_{\Sigma_{\lambda_0+\tau}\setminus K}
v^{2^*} \,dx\right)^{\frac{2}{n}}< \frac{1}{2}, \qquad \forall \,\, 0 \le \tau<\bar\tau
\]
which plugged into \eqref{asdgcahijgljhp} implies that
$\displaystyle \int_{\Sigma_{\lambda_0+\tau}\setminus K}|\nabla
w_{\lambda_0+\tau}^+|^2\,dx = 0$ for every $0 \le \tau<\bar\tau$.
Hence $\displaystyle \int_{\Sigma_{\lambda_0+\tau}}|\nabla
w_{\lambda_0+\tau}^+|^2\,dx = 0$ for every $0 \le \tau<\bar\tau$,
since $\nabla w_{\lambda_0+\tau}^+$ is zero in a neighborhood of
$K$. The latter and Lemma \ref{stimgrad} imply that $
w_{\lambda_0+\tau}^+ =0 $ on $ \Sigma_{\lambda_0 +\tau}$ for every
$0 \le \tau<\bar\tau$ and thus $v\leq v_{\lambda_0+\tau}$ in
$\Sigma_{\lambda_0+\tau}\setminus R_{\lambda_0+\tau}(\Gamma^* \cup
\{0\})$ for every $0 \le \tau<\bar\tau$ . Which proves the claim of
Step 2.

\noindent \emph{Step 3: conclusion.} The symmetry of the Kelvin
transform $v$ follows now performing the moving plane method in the
opposite direction. The fact that that $v$ is symmetric w.r.t. the
hyperplane $\{ x_1=0 \}$ implies the symmetry of the solution $u$
w.r.t. the hyperplane $\{ x_1=0 \}$. The last claim then follows by
the invariance of the considered problem with respect to isometries
(translations and rotations).

\end{proof}

\begin{proof}[Proof of Corollary \ref{CorSim}]

The function $v(x)=u(x+x_0)$ satisfies the assumptions of Theorem
\ref{main2} with $\Gamma = \{0\}$. An application of  Theorem
\ref{main2} yields that $v$ is symmetric with respect to every
hyperplane through the origin and so the original solution $u$ must
be radially symmetric with respect to $x_0$.  This proves item (i).
Since item (ii) is a special case of item (iii) with $k=1$, we need
only to prove item (iii). To this end we observe that, up to an
isometry, we can suppose that the affine
$k-$dimensional subspace is $\{ x_{k+1} =...= x_n = 0 \}$.
Therefore, we can apply Theorem \ref{main2} to get that $u$ is
symmetric with respect to each hyperplane of $\R^n$ containing $\{
x_{k+1} =...= x_n = 0 \}$; i.e., $u$ is invariant with respect to
every rotation of $ \R^n$ which leaves invariant the set $\{ x_{k+1}
=...= x_n = 0 \}$. Note that we can apply Theorem \ref{main} since
any affine $k-$dimensional subspace of $\R^n$, with $ 1 \le k \le
n-2$, has zero 2-capacity in $\R^n$ (and so
$\underset{\R^n}{\operatorname{Cap}_2}(\Gamma)=0$).

\end{proof}

\bigskip

\end{document}